\newcommand{\guio}[1]{\nobreakdash-\hspace{0pt}}
\numberwithin{equation}{section}
\newtheorem{theorem}{Theorem}[section]
\newtheorem{lemma}[theorem]{Lemma}
\newtheorem{prop}[theorem]{Proposition}
\theoremstyle{definition}
\newtheorem{assumption}{Assumption}
\newtheorem{remark}[theorem]{Remark}
\newcommand{\R}{\mathbb{R}}
\newcommand{\Per}{\operatorname{Per}}
\newcommand{\supp}{\operatorname{supp}}
\title[Shape Optimisation for nonlocal anisotropic energies]
{Shape Optimisation for nonlocal anisotropic energies}
\author[R.~Cristoferi]{R.~Cristoferi}
\author[M.G.~Mora]{M.G.~Mora}
\author[L.~Scardia]{L.~Scardia}
\address[R. Cristoferi]{Mathematics Department, Institute for Mathematics, Astrophysics, and Particle Physics, Radboud University, The Netherlands}
\email{riccardo.cristoferi@ru.nl}
\address[M.G. Mora]{Dipartimento di Matematica, Universit\`a di Pavia, Italy}
\email{mariagiovanna.mora@unipv.it}
\address[L. Scardia]{Department of Mathematics, Heriot-Watt University, United Kingdom}
\email{L.Scardia@hw.ac.uk}
\date{}
\begin{document}

\begin{abstract}
In this paper we consider shape optimisation problems for sets of prescribed mass, where the driving energy functional is nonlocal and \textit{anisotropic}. More precisely, we deal with the case of attractive/repulsive interactions in two and three dimensions, where the attraction is quadratic and the repulsion is given by an anisotropic variant of the Coulomb potential. 
Under the sole assumption of strict positivity of the Fourier transform of the interaction potential, we show the existence of a threshold value for the mass above which the minimiser is an ellipsoid, and below which the minimiser does not exist. If, instead, the Fourier transform of the interaction potential is only nonnegative, we show the emergence of a dichotomy: either there exists a threshold value for the mass as in the case above, or the minimiser is an ellipsoid for any positive value of the mass. 
\bigskip

\noindent\textbf{AMS 2010 Mathematics Subject Classification:} 49Q10 (primary); 49K20 (secondary)

\medskip

\noindent \textbf{Keywords:} nonlocal energy, attractive-repulsive interactions, anisotropic interactions, Coulomb potential
\end{abstract}

\maketitle

\section{Introduction}
In this paper we consider a class of nonlocal and \textit{anisotropic} shape optimisation problems for sets of prescribed mass.
More precisely, for a given mass $m>0$, we are interested in the minimisation of the energy functional 
\begin{equation}\label{en:general-dd}
\mathcal I(\Omega)= \int_{\Omega}\int_{\Omega} \bigg(W(x-y)+\frac12|x-y|^2\bigg) \,d x dy,
\end{equation}
over the class of sets with mass $m$, 
$$
\mathcal{A}_m= \big\{ \Omega\subset \R^d: \ \Omega \, \text{ measurable, } |\Omega|=m\big\},
$$
for $d=2,3$. In \eqref{en:general-dd}, the interaction potential $W$ is defined for $x\neq 0$ as 
\begin{equation}\label{potdef-dd}
W(x)= 
\begin{cases}
\medskip
\displaystyle -\log|x| + \kappa\bigg(\frac{x}{|x|}\bigg) \quad &\text{if } d=2,\\
\displaystyle\frac1{|x|}\kappa\bigg(\frac{x}{|x|}\bigg) &\text{if } d=3,
\end{cases}
\end{equation}
and $W(0)=+\infty$, and satisfies the following assumption.

\begin{assumption}\label{assumption-k} We require the profile $\kappa: \mathbb{S}^{d-1}\to \R$ to be even, and such that both $W$ and $\widehat W$ are continuous on $\mathbb{S}^{d-1}$. Additionally, if $d=3$ we require $\kappa$ to be strictly positive on $\mathbb{S}^{d-1}$.
\end{assumption}

The energy $\mathcal I$ is the sum of two competing terms: an attractive, quadratic interaction, that dominates at large distances, and a repulsive, Coulomb-like interaction, driven by the anisotropic potential $W$. The additional positivity requirement for $\kappa$ in the three-dimensional case is there to preserve the repulsive nature of $W$; this is not needed for $d=2$ since $\kappa$ is bounded, and hence at short range the repulsive nature of $-\log|\cdot|$ is not affected by the additional anisotropy $\kappa$.

The potential $W$ is an anisotropic extension of the classical, radially symmetric Coulomb potential, which corresponds to the special case of a constant profile $\kappa$. The anisotropy is fully encoded in the profile $\kappa$, which introduces an additional dependence on the directions of interaction. The apparent mismatch between the additive structure of the potential in the two-dimensional case and the multiplicative structure for $d=3$ is due to the logarithm being the limit of Riesz potentials. Indeed, the additive radial/angle splitting in two dimensions is a `limit' multiplicative splitting for Riesz, since
$$
|x|^{-s}\bigg(\frac1s+ \kappa\bigg(\dfrac{x}{|x|}\bigg) \bigg) - \frac1s \to -\log|x| + \kappa\bigg(\frac{x}{|x|}\bigg) \quad \text{as } s\to 0^+.
$$
Moreover, this apparent mismatch disappears in Fourier space: the structure of $\widehat{W}$ for $d=2$ and $d=3$ is identical (see \eqref{FTW-dd}) and exhibits a multiplicative splitting between its radial and directional components.

\

The main result of this work is the characterisation of the minimisers of  $\mathcal I$ in the class of sets $\mathcal{A}_m$, for any mass $m>0$. This is done under the sole assumption that the Fourier transform $\widehat W$ of the potential $W$ on the sphere $\mathbb{S}^{d-1}$ is nonnegative. 

In fact we have two main results, depending on whether $\widehat W$ is strictly positive or not. In the first case we show that above a given threshold for the mass the unique minimiser of $\mathcal I$ is a $d$-dimensional ellipsoid. Uniqueness has to be intended up to translations, since the functional $\mathcal I$ is translation-invariant. More precisely we have the following.

\begin{theorem}\label{Mainthm}
Let $W$ be a potential defined as in \eqref{potdef-dd} and satisfying Assumption \ref{assumption-k}. Assume that $\widehat{W}> 0$ on $\mathbb{S}^{d-1}$.
Then there exists a critical value $m^\ast>0$ such that for $m<m^\ast$ the energy $\mathcal I$ in \eqref{en:general-dd} has no miminiser in $\mathcal{A}_m$, while for $m\geq m^\ast$ the unique minimiser of $\mathcal I$ in $\mathcal{A}_m$ is a $d$-dimensional ellipsoid, up to translations.  Moreover, as $m\to +\infty$, the optimal ellipsoids converge to a ball, namely the ratios of the semi-axes converge to one.
\end{theorem}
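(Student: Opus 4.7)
The plan is to pass to the convex relaxation of the shape optimisation problem, minimising
\[
\mathcal{F}(\rho) = \int\!\!\!\int_{\R^d\times\R^d}\!\!\Bigl(W(x-y)+\tfrac12|x-y|^2\Bigr)\rho(x)\rho(y)\,dx\,dy
\]
over densities $\rho\in L^\infty(\R^d)$ with $0\leq \rho\leq 1$ and $\int\rho = m$, and then to show that the relaxed minimiser is always (a multiple of) the characteristic function of an ellipsoid, with the multiple equal to $1$ precisely when $m\geq m^\ast$. The sharp and relaxed infima coincide by weak-$\ast$ density of characteristic functions in the relaxed class together with weak-$\ast$ continuity of $\mathcal{F}$, so a minimiser exists in $\mathcal{A}_m$ if and only if the (by then proved unique) relaxed minimiser is a characteristic function.

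The first step is existence and uniqueness in the relaxed problem. Existence follows from the direct method, with tightness of minimising sequences supplied by the quadratic confinement. Uniqueness up to translations is a consequence of strict convexity of $\mathcal{F}$ modulo translations: by Plancherel, the nonlocal contribution equals $c_d\int \widehat{W}(\xi)|\widehat{\rho}(\xi)|^2\,d\xi$, which is strictly convex under $\widehat{W}>0$, while the only concave contribution from the quadratic interaction is the first-moment term, killed by fixing the center of mass. Writing the Karush--Kuhn--Tucker conditions for $\rho^\ast_m$, the \emph{effective potential}
\[
\Psi(x):=2(W*\rho^\ast_m)(x)+m|x|^2
\]
must satisfy $\Psi=\mu$ on $\{0<\rho^\ast_m<1\}$, $\Psi\leq \mu$ on $\{\rho^\ast_m=1\}$, and $\Psi\geq \mu$ on $\{\rho^\ast_m=0\}$, for a Lagrange multiplier $\mu\in\R$.

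The crucial structural input---the anisotropic analogue of the classical fact that the Newtonian potential of a uniform ellipsoid is a quadratic polynomial inside the ellipsoid---is that for every $d$-dimensional ellipsoid $E$, the restriction of $W*\chi_E$ to $E$ is a quadratic polynomial in $x$. Accepting this, the ansatz $\rho = c\,\chi_E$ satisfies the KKT relations precisely when the Hessian $D^2(W*\chi_E)|_E$ equals $-\tfrac{m}{2c}\,\mathrm{Id}$; combined with the mass constraint $c|E|=m$ and the inequality $c\leq 1$, this yields a system for the pair $(E,c)$ parametrised by $m$. Solving it produces a critical mass $m^\ast$: for $m<m^\ast$ the unique ansatz solution has $c<1$ on a fixed-shape support $E^\ast$, so the relaxed minimiser is not a characteristic function and no minimiser exists in $\mathcal{A}_m$; for $m\geq m^\ast$ the bound $c\leq 1$ saturates and $\rho^\ast_m=\chi_{E_m}$ for an ellipsoid of mass $m$ whose shape depends continuously on $m$.

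The main obstacle is the rigidity step---proving that the KKT system admits a unique solution of ellipsoidal form. This reduces to a careful analysis of the nonlinear map sending an ellipsoid $E$ to $D^2(W*\chi_E)|_E$ and of its inversion against the equation $D^2(W*\chi_E)|_E=-\tfrac{m}{2c}\,\mathrm{Id}$; strict positivity of $\widehat{W}$ enters essentially both for the strict convexity of $\mathcal{F}$ and for the invertibility that arises in the Fourier-side analysis. Finally, convergence of $E_m$ to a ball as $m\to \infty$ follows by rescaling $E_m$ to unit volume: using the homogeneity properties of $W$, the anisotropic nonlocal contribution becomes lower-order compared to the isotropic quadratic one, so in the limit only the isotropic attractive term is relevant, and standard rearrangement inequalities force the limit shape to be a ball.
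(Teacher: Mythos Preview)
Your overall architecture---relax to densities, use strict convexity for uniqueness, and verify that an ellipsoidal ansatz satisfies the KKT conditions---matches the paper. But the central step is misstated and the mechanism for carrying it out is missing.

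The error is in your KKT condition for the saturated case $c=1$. When $c<1$ you correctly need the effective potential to be \emph{constant} on $E$, which gives $D^2(W\ast\chi_E)|_E=-\tfrac{m}{2c}\,\mathrm{Id}$. But when $c=1$ the interior condition is only an \emph{inequality}; what is required is that the quadratic form $(2Q+mI)x\cdot x$ (with $2Q=D^2(W\ast\chi_E)|_E$) be constant on $\partial E$ and no larger inside. This is equivalent to $Q+\tfrac{m}{2}I=\lambda M^{-1}$ for some $\lambda\geq0$, where $M$ is the positive definite matrix defining $E$. Your condition $Q=-\tfrac{m}{2}I$ corresponds to $\lambda=0$, which forces $E=E^\ast$ and $|E|=m^\ast$; it cannot produce the ellipsoid for any $m>m^\ast$. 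So your ``system for the pair $(E,c)$'' does not cover the supercritical regime at all.

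Once the correct equation $Q+\tfrac{m}{2}I=\lambda M^{-1}$, $\lambda\geq0$, $\det M=m^2/|B|^2$ is written down, the real work is to solve it, and this is where the paper's main idea lies. The paper recasts these conditions as stationarity for an explicit scalar function $f$ on positive definite matrices under the \emph{unilateral} constraint $\det M\geq m^2/|B|^2$; existence of a minimiser for this auxiliary problem is proved by a direct coercivity argument, and the key step is to show the determinant constraint is saturated (otherwise $\lambda=0$ would force $E=E^\ast$, contradicting $m>m^\ast$). Your proposal gestures at ``a careful analysis of the nonlinear map $E\mapsto D^2(W\ast\chi_E)|_E$'' but provides no mechanism; this is the genuine content of the proof. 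You also omit the verification that the outer KKT inequality holds on $\R^d\setminus E$, which the paper handles separately using the explicit Fourier representation of $\nabla(W\ast\chi_E)$ outside $E$. Finally, your ball-convergence argument via rearrangement is only heuristic; the paper instead extracts quantitative bounds on the semi-axes directly from the stationarity equations.
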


In the case of degeneracy of $\widehat W$, instead, we have the following dichotomy.

\begin{theorem}\label{Mainthm2}
Let $W$ be a potential defined as in \eqref{potdef-dd} and satisfying Assumption \ref{assumption-k}. Assume that $\widehat{W}\geq 0$ on $\mathbb{S}^{d-1}$.
Then we are either in the case of Theorem~\ref{Mainthm}, or the unique minimiser of $\mathcal I$ in $\mathcal{A}_m$ is a $d$-dimensional ellipsoid, up to translations, for every $m>0$. Moreover, as $m\to +\infty$, the optimal ellipsoids converge to a ball.
\end{theorem}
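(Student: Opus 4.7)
The plan is to revisit the proof of Theorem~\ref{Mainthm} and identify precisely where strict positivity of $\widehat W$ is used. My expectation is that the step identifying any minimiser of $\mathcal I$ on $\mathcal A_m$ as an ellipsoid only needs $\widehat W \geq 0$, while strict positivity enters solely to force the critical mass $m^\ast$ to be strictly positive. The dichotomy of Theorem~\ref{Mainthm2} then corresponds to the two possibilities $m^\ast > 0$ and $m^\ast = 0$.

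The first step is the Fourier-side reformulation of the energy. By Plancherel on the nonlocal part and by expanding $|x-y|^2 = |x|^2 + |y|^2 - 2\langle x,y\rangle$, the functional $\mathcal I$ decomposes into a nonnegative quadratic form in $\widehat{\chi_\Omega}$ weighted by $\widehat W$, plus an explicit quadratic expression in the first and second moments of $\chi_\Omega$. With $\widehat W \geq 0$ alone, the nonlocal part is convex and the attractive part is strictly convex in the moments; the convex-duality/Euler--Lagrange argument from Theorem~\ref{Mainthm} which shows that the generated potential must be constant on the minimising set, and forces the shape to be an ellipsoid, relies only on these convexity properties and transfers to the present setting.

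Step two is the analysis of the Euler--Lagrange system on the family of ellipsoids. Parametrising ellipsoids of mass $m$ by their semi-axes, the first-order conditions reduce to a finite-dimensional system whose solvability depends explicitly on $m$ and on $\kappa$. Define $m^\ast \in [0, +\infty)$ as the infimum of the masses for which this system produces an admissible ellipsoid (i.e. the Lagrange-multiplier inequality holds outside the ellipsoid as well). The nonexistence argument for $m < m^\ast$ from Theorem~\ref{Mainthm} transfers verbatim, since it is based on the failure of the admissibility condition and not on strict positivity of $\widehat W$.

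The dichotomy is then read off by distinguishing whether $m^\ast > 0$ or $m^\ast = 0$: the former gives exactly the statement of Theorem~\ref{Mainthm}, while the latter says that an ellipsoidal minimiser exists for every $m > 0$. In the second case, uniqueness is still available because the attractive part is strictly convex in the first and second moments of $\chi_\Omega$, even if $\widehat W$ vanishes at some frequencies. The convergence of optimal ellipsoids to a ball as $m \to +\infty$ follows from a scaling argument: the homogeneous nonlocal part of the energy becomes negligible compared to the attractive quadratic part, so the limiting problem reduces to minimising the second moment at fixed volume, which picks out the ball. The point I expect to be most delicate is verifying that the alternative $m^\ast = 0$ can genuinely occur under mere nonnegativity of $\widehat W$, and producing the uniqueness statement in that regime without relying on strict convexity of the nonlocal quadratic form.
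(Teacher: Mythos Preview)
Your proposal misidentifies the mechanism of the proof of Theorem~\ref{Mainthm}, and as a consequence the reduction for Theorem~\ref{Mainthm2} is not as you describe.

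The paper does \emph{not} argue that the Euler--Lagrange conditions force the minimising set to be an ellipsoid. Instead it \emph{constructs} a candidate ellipsoid by introducing an auxiliary scalar function $f$ on positive definite matrices $M$ (encoding semi-axes and orientation), minimising $f$ under the \emph{unilateral} constraint $\det M\geq m^2/|B|^2$, and then proving the constraint is saturated. Once an ellipsoid satisfying \eqref{EL1E-d}--\eqref{EL3E-d} has been produced, uniqueness in $\mathcal A_{m,1}$ (Proposition~\ref{ex+uniq-d}, already valid under $\widehat W\geq 0$) identifies it as the minimiser. Your ``convex-duality/EL forces an ellipsoid'' step is not in the paper and would need an independent argument that you have not supplied.

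The place where the two cases of the dichotomy actually diverge is the contradiction argument showing $\det M = m^2/|B|^2$ (Step~3.2). If $\det M>m^2/|B|^2$, the multiplier vanishes and $\chi_E/|E|$ satisfies the Euler--Lagrange conditions \eqref{old-EL1-d}--\eqref{old-EL2-d} for $\mathcal I$ on $\mathcal P(\R^d)$. In the strictly positive case this forces $E=E^*$ and hence $\det M=(m^*)^2/|B|^2$, contradicting $m>m^*$. The paper's key observation for Theorem~\ref{Mainthm2} is that the dichotomy is \emph{not} ``$m^*>0$ versus $m^*=0$'' but rather whether the minimiser of $\mathcal I$ on $\mathcal P(\R^d)$ is (the normalised characteristic function of) a full-dimensional ellipsoid or a measure with $(d-1)$-dimensional support. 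In the latter case the same contradiction fires for \emph{every} $m>0$: $\chi_E/|E|$ cannot be the probability-measure minimiser because its support is $d$-dimensional. This link to the measure problem is the missing idea in your proposal.

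Two further points. Your uniqueness argument (``the attractive part is strictly convex in the first and second moments'') is incorrect: after centring the barycentre, the confinement term $m\int|x|^2\rho\,dx$ is \emph{linear} in $\rho$. Uniqueness comes from strict convexity of the full energy $I_m$ on compactly supported measures, established in the cited references under $\widehat W\geq 0$ and invoked in Proposition~\ref{ex+uniq-d}. And your scaling argument for convergence to a ball is heuristic; the paper instead extracts the estimate \eqref{estimate:rescaled-axes} on rescaled semi-axes directly from the Euler--Lagrange equality \eqref{AM-inv-d}, together with a uniform bound \eqref{est-trace-t} coming from the minimality of $f$.
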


The occurrence of one or the other possibility in Theorem~\ref{Mainthm2} is related to the minimisation problem for the energy \eqref{en:general-dd} in the wider class of measures (rather than sets) with prescribed mass, which we briefly recall.

In \cite{CS2d,CS3d} and \cite{MMRSV-3d, Mora-Muenster}, the authors consider the same anisotropic functional as in \eqref{en:general-dd}, but defined on the set $\mathcal{P}(\R^d)$ of
probability measures, for $d=2,3$. More precisely, the authors study the minimisation of the functional 
\begin{equation}\label{en:usual}
\mathcal{I}(\mu) = \int_{\R^d}\int_{\R^d} \bigg(W(x-y)+\frac12|x-y|^2\bigg)\,d\mu(x) d\mu(y),
\end{equation}
for $\mu\in\mathcal{P}(\R^d)$, where $W$ is as in \eqref{potdef-dd}.  

The main result in \cite{CS2d,CS3d} and \cite{MMRSV-3d, Mora-Muenster} is that, under the assumption that $\widehat{W}\geq 0$ on $\mathbb{S}^{d-1}$, the minimiser $\mu^\ast$ of $\mathcal I$ in $\mathcal{P}(\R^d)$ is unique up to translations and can be characterised as follows:
\begin{enumerate}
\item\label{aaa} If $\widehat{W}> 0$ on $\mathbb{S}^{d-1}$,
then $\mu^\ast$ is of the form
\begin{equation}\label{ellipsoidlaw-intro}
\mu^*=\frac{\chi_{E^*}}{|E^*|},
\end{equation}
for a $d$-dimensional ellipsoid $E^\ast\subset \R^d$ centred at $0$.
\item If $\widehat{W}\geq 0$ on $\mathbb{S}^{d-1}$, then
either $\mu^*$ is as in \eqref{ellipsoidlaw-intro} or $\mu^*$ is supported on a $(d-1)$-dimensional set - an ellipse if $d=3$, and a segment if $d=2$. In the case of loss of dimensionality the density of the measure $\mu^*$ is not constant, and in particular it is given by the celebrated semi-circle law in the two-dimensional case.
\end{enumerate}

Since $\mathcal{I}(\mu)$ is a quadratic function of $\mu$, optimal measures - or more precisely their shapes - do not change by changing the mass; in other words, 
for any $m>0$ the energy $\mathcal I$ is minimised in the class $m\mathcal{P}(\R^d)$ simply by $m\mu^*$.

The situation is different for sets, since the attractive and the repulsive part of the energy rescale differently under a dilation. To see this, let $\Omega\in \mathcal{A}_{m}$; we set $\Omega_{0}:=\left({1}/{m}\right)^{1/d}\Omega$, so that $\Omega_{0}\in \mathcal{A}_{1}$. By changing variables we have that 
\begin{align*}
\mathcal{I}(\Omega) &= \mathcal{I}(m^{1/d}\,\Omega_{0})\\
&
=m^{\frac{d+2}d} 
\bigg(
\int_{\Omega_{0}}\int_{\Omega_{0}} \bigg(W(x-y)+\frac{m}2|x-y|^2\bigg) \,dxdy \bigg) - \frac{m^2}2\delta_{d2}\log m
 =: J_m(\Omega_0),
\end{align*}
where the last term activates only for $d=2$, and is due to the logarithm not being fully zero-homogeneous. Hence, minimising $\mathcal I$ on $\mathcal{A}_{m}$ is equivalent to minimising $J_m$ on $\mathcal{A}_{1}$. This shows that if $m\ll 1$ the attraction term will weigh comparatively less than the nonlocal repulsive term, and viceversa if $m\gg 1$. In other words, for large mass attraction dominates, while for small mass repulsion dominates.

As a corollary of this observation, for large mass it is natural to expect the optimal shapes to be more and more `isotropic', since the dominating attraction term is radially symmetric.

\

Assume now that the Fourier transform of $W$ is strictly positive, so that $\mathcal{I}$ is minimised on $\mathcal{P}(\R^d)$
by the measure $\mu^*$ in \eqref{ellipsoidlaw-intro}. 
The critical threshold $m^*$ for the mass in Theorem~\ref{Mainthm} is exactly the mass of the support $E^*$ of $\mu^*$.

In the special case $m=m^*$ it is clear that the unique minimiser of $\mathcal I$ in the class of sets $\mathcal{A}_{m^*}$ is exactly $E^*$, up to translations, since $\chi_\Omega\in m^*\mathcal{P}(\R^d)$ for every $\Omega\in\mathcal{A}_{m^*}$. However, for $m\neq m^*$ the connection between the two problems is less clear. Indeed, the minimiser $m\mu^*$ in the class $m\mathcal{P}(\R^d)$ is never a set, and so for the minimisation problem on sets a new approach has to be developed. This is the main contribution of this paper.

Theorem~\ref{Mainthm} ensures that for $m>m^*$ the minimiser on sets is still an ellipsoid $E$. We note, however, that its relation with the ellipsoid $E^*$ at the critical mass is not straightforward. In particular, $E$ is not simply given by the dilated set $(m/m^*)E^*$, since by Theorem~\ref{Mainthm} it becomes more and more rounded as $m$ gets larger.

For a degenerate $\widehat W$ the dichotomy in our second main result, Theorem~\ref{Mainthm2}, exactly corresponds to the occurrence of a full or lower dimensional minimiser for the problem on measures.
More precisely, if the minimiser on $\mathcal{P}(\R^d)$ is the measure $\mu^*$ in \eqref{ellipsoidlaw-intro}, then the problem on sets exhibits a critical mass $m^*$ given by $|E^*|$.
If, instead, the minimiser on $\mathcal{P}(\R^d)$ is a measure with a $(d-1)$-dimensional support, then the problem on sets has a solution for every mass $m>0$.

\subsection{Motivation and comparison with the radially symmetric case} The problem we consider can be interpreted as a first shape optimisation result for nonlocal anisotropic energies with competing attractive and repulsive terms. 

The isotropic counterpart of this problem is well-studied and its analysis started in the seminal work \cite{BCT18}, where the authors consider the energy 
\begin{equation}\label{en:E}
{\mathcal E}(\Omega)=  \int_{\Omega}\int_{\Omega} K(x-y)\,dx dy, \quad \Omega\in \mathcal{A}_m,
\end{equation}
with $K$ a power-law potential of the form 
\begin{equation}\label{K-pq}
K(x)=\frac{|x|^q}{q}-\frac{|x|^p}{p}, \quad -d<p<0, \quad q>0.
\end{equation}
The sum above combines an attractive term (the $q$-power), dominant at long ranges, with a repulsive term (the $p$-power) that dominates the interactions at short range. In  \cite{BCT18}, the majority of the results pertain to quadratic attraction $q=2$ and to repulsion with $p\leq 2-d$, where $d-2$ is the critical exponent of the Coulomb potential (with the understanding that $-\frac{|x|^p}{p} =- \log|x|$ for $d=2$ and $p=2-d=0$). In particular, for $q=2$ and $p=2-d$, the energy \eqref{en:E} becomes
\begin{equation}\label{en:EC}
{\mathcal E}(\Omega) = \int_{\Omega}\int_{\Omega} \bigg(W_{\text{C}}(x-y)+\frac12|x-y|^2\bigg)\, dx dy, \quad \Omega\in \mathcal{A}_m,
\end{equation}
where we denoted with $W_{\text{C}}$ the isotropic Coulomb potential. In other words, it coincides with \eqref{en:general-dd} in the case of a constant profile $\kappa$.
Moreover, the energy \eqref{en:EC} can be considered as a toy model for the energy of spring-like media, for which the short-range interactions are Coulombic and repulsive, while at long range a Hookean attraction dominates.

In \cite{BCT18} the authors show that there is a threshold for the mass, given by the volume of the unit ball $B$, such that the energy $\mathcal E$ in \eqref{en:EC} admits no minimiser if $m<|B|$, while for $m\geq |B|$ the minimisers of $\mathcal E$ are the balls of mass $m$. This is consistent with Theorem~\ref{Mainthm}, since one can show that $\frac1{|B|}\chi_B$ is exactly the minimiser (up to translations) of the energy $\mathcal E$ in \eqref{en:EC} among probability measures.

For more general power-law interactions \eqref{K-pq} with some restrictions on $p$ and quadratic attraction $q=2$ 
the authors in \cite{BCT18} prove the existence of two thresholds for the mass, $0<m_1\leq m_2$, such that 
the problem has no minimisers if $m<m_1$, whereas balls are the unique minimisers if $m>m_2$.
These results have been extended to a general power-law attraction $q>0$ in \cite{FL18, FL21} and to a more general but still isotropic repulsive kernel in \cite{Car}. It is not known whether the two mass thresholds coincide, except in the exact case \eqref{en:EC}.

Note that in \eqref{K-pq} (as in \eqref{potdef-dd}) the repulsion is \textit{strong}: in particular the relaxed energy on measures does not allow concentration on points, and atoms have infinite energy. The case of weak repulsion has been considered in \cite{CPT-Preprint}, the model case being that of power-law potentials of the form \eqref{K-pq}, with $q>p>0$ (and hence with $K(0)=0$). For these energies, as for the energies in \cite{FL21}, the optimal measure is generally not expected to be a set. However, if the optimal measure concentrates on the vertices of a regular $(d+1)$-gon, or on a sphere, then positive results for the minimisation on sets can be inferred.

Another important class of isotropic attractive/repulsive energies is given by $$
{\mathcal E}_{\text P}(\Omega) = \int_{\Omega}\int_{\Omega} W_{\text{C}}(x-y) \,dx  dy +\Per(\Omega), \quad \Omega \in \mathcal{A}_m,
$$
where $\Per$ denotes the classical perimeter. Characterising minimisers of ${\mathcal E}_{\text P}$ corresponds to finding solutions to a nonlocal isoperimetric problem. The energies ${\mathcal E}_{\text P}$ have been first introduced by Gamow in his liquid drop model and widely studied since. What is known so far is that there exist two (possibly different) thresholds for the mass, $0<m_1 \leq m_2$, such that balls are the unique minimisers for $m\leq m_1$, while minimising sets fail to exist for $m> m_2$ (see \cite{CR, FE15, Julin17, KM14, LO14}). 
Only for certain Riesz repulsive potentials, it was proved in \cite{BC14} that $m_1=m_2$.
The main difference with \eqref{en:EC} is that the power-law attraction is replaced by the perimeter. While the power-law attraction dominates for large mass, the perimeter dominates for small mass, so the thresholds for the minimality of balls are reversed. Although the results for the nonlocal isoperimetric problem and for the attractive/repulsive power-law interactions are similar, the nature of the two problems is very different and so are the techniques of proof. Considering an anisotropic analogue of ${\mathcal E}_{\text P}$ is a very natural direction of investigation, but this is not a direction we will pursue here.

\subsection{Method of proof} 
One of the main difficulties in proving existence of an optimal set for the energy $\mathcal I$ is establishing compactness for minimising sequences in $\mathcal A_m$.
Indeed, a minimising sequence of sets in $\mathcal A_m$ will in general converge weakly to a density function taking values in $[0,1]$.
The strategy we follow consists in proving directly that a specific candidate - an ellipsoid - is a solution of the problem.
While this is the same approach used in \cite{BCT18} for the isotropic functional \eqref{en:EC}, the corresponding proofs are substantially different. 
Indeed, in the radial case there is an obvious candidate, the ball, which is completely determined once the mass is prescribed.
In the anisotropic setting the identification of a natural candidate, even among ellipsoids, is not trivial, since the mass constraint is clearly not enough to determine the semi-axes and the possible rotation of the ellipsoid with respect to the coordinate axes.

For the proof of existence, we consider the relaxed energy 
\begin{equation}\label{en:general-rho}
{\mathcal I}(\rho)= \int_{\R^d}\int_{\R^d} \bigg(W(x-y)+\frac12|x-y|^2\bigg) \rho(x)\rho(y)\,d x dy,
\end{equation}
which extends \eqref{en:general-dd} to the class of densities 
\begin{equation}\label{def:Am1}
\mathcal{A}_{m,1}=\left\{\rho \in L^1(\R^d)\cap L^\infty(\R^d): \ \|\rho\|_{L^1}=m, \ 0\leq \rho\leq 1 \text{ a.e.}
\right\}.
\end{equation}
It was proved in \cite{BCT18} that a set $\Omega\in \mathcal{A}_m$ is a minimiser of \eqref{en:general-dd} if and only if its characteristic function $\chi_\Omega \in \mathcal{A}_{m,1}$ is a minimiser of the relaxed energy \eqref{en:general-rho}, and the same holds true in our case. Since for small mass the minimising densities are not the characteristic functions of a set (see Section~\ref{sect:sub}), our original problem on sets can only have a solution for large enough mass. 

For large mass we then (equivalently) study the problem on densities, for which existence and compact support of minimisers can be proved by standard arguments. 

A special feature of the quadratic attraction potential is that in the subclass $\mathcal{A}^0_{m,1}\subset \mathcal{A}_{m,1}$ of densities with barycentre at the origin (and hence zero first moments), the energy \eqref{en:general-rho} rewrites as 
\begin{equation}\label{en:general-dd2}
{\mathcal I}(\rho)=I_m(\rho):= \int_{\R^d}\int_{\R^d} W(x-y) \rho(x)\rho(y)\,d x dy +m\int_{\R^d}|x|^2 \rho(x)\,dx, \quad \rho \in \mathcal{A}^0_{m,1}.
\end{equation}
Uniqueness, up to translations, follows by the sign condition on the Fourier transform of $W$, which implies that the energy $I_m$ is strictly convex. Strict convexity of the energy, in its turn, guarantees that  the minimiser can be characterised as the only solution of the Euler-Lagrange optimality conditions. 
Motivated by the results in \cite{CS2d,CS3d} and \cite{MMRSV-3d, Mora-Muenster} we look for a candidate ellipsoid $E\subset \R^d$ centred at the origin, with $|E|=m$, such that its characteristic function $\chi_E$ satisfies the Euler-Lagrange conditions for $I_m$, namely
\begin{align}\label{1}
(W\ast \chi_E) (x) + m\frac{|x|^2}2&= \lambda \qquad \text{if } x\in \partial E,\\ \label{2}
\smallskip
(W\ast \chi_E) (x) + m\frac{|x|^2}2&\le \lambda \qquad \text{if } x\in E^\circ,\\ 
\smallskip
(W\ast \chi_E) (x) + m\frac{|x|^2}2&\ge \lambda \qquad \text{if } x\in \R^d\setminus E, \label{3}
\end{align}
for a constant $\lambda\in \R$. To evaluate the potential of a generic ellipsoid $E$ we use the representation of the potential in Fourier form proved in \cite{MMRSV-3d, Mora-Muenster} for $d=2,3$. Following \cite{MMRSV-3d, Mora-Muenster} one can see that condition \eqref{3} is automatically satisfied by any solution $E$ of \eqref{1}--\eqref{2}. 

The key idea to solve \eqref{1}--\eqref{2} is to rewrite \eqref{1} as the stationarity condition for an auxiliary scalar function $f$ defined on symmetric and positive definite matrices $M$ (encoding the information on the semi-axes and orientation of $E$), under the determinant constraint $\det M=\frac{m^2}{|B|^2}$ (encoding the mass constraint $|E|=m$). One of the main advantages of this alternative formulation is that \eqref{2} corresponds to a condition on the sign of the Lagrange multiplier associated to the constraint.

The strategy is then to first show that the auxiliary minimisation problem for $f$  obtained by replacing the equality constraint for the determinant with the unilateral condition $\det M\geq \frac{m^2}{|B|^2}$ admits a solution. As a final step we show that this solution in fact satisfies the equality constraint. This immediately gives the required sign condition for the multiplier, and concludes the proof of  \eqref{1}--\eqref{2}.

As shown in Lemma~\ref{lemma} the auxiliary function $f$ coincides with the energy $\mathcal{I}$ evaluated on normalised characteristic functions of ellipsoids.
In particular, minimising $f$ under the unilateral constraint $\det M\geq \frac{m^2}{|B|^2}$ corresponds to minimising $\mathcal{I}$ on densities of the form $\chi_E/|E|$ for $E$ ellipsoid centred at $0$ with $|E|\geq m$. Our proof shows the surprising result that, for an ellipsoid, being a critical point of this constrained problem on ellipsoids is in fact equivalent to being a critical point of $\mathcal{I}$ on $\mathcal{A}_{m,1}$.


\subsection{Outline of the paper}
The paper is organised as follows. In Section~\ref{sect:prelim} we collect some preliminary results that will be used in the paper. In particular, in Section~\ref{sec:potential-dd} we recall the Fourier representation of $W\ast \chi_E$, computed in \cite{Mora-Muenster, MMRSV-3d}, which plays a crucial role in our approach. Section~\ref{sec:main-res} is the heart of the paper and contains the proof of our main results, Theorems \ref{Mainthm} and \ref{Mainthm2}. Finally, in Section~\ref{sect:further} we prove some further properties of the optimal ellipsoids and of the energy.

\section{Preliminary results}\label{sect:prelim}
In this section we collect some preliminary results, mostly derived from \cite{BCT18} and \cite{MMRSV-3d,Mora-Muenster}.


\subsection{Minimisation of $\mathcal I$ on $\mathcal{A}_{m,1}$ and on $\mathcal{A}_{m}$.} The following result provides existence and uniqueness (up to translations) of the minimiser of $\mathcal I$ in the relaxed class $\mathcal{A}_{m,1}$, introduced in \eqref{def:Am1}, as well as its characterisation in terms of the Euler-Lagrange conditions.
 
\begin{prop}\label{ex+uniq-d}
Let $W$ be as in \eqref{potdef-dd} with $\kappa$ satisfying Assumption \ref{assumption-k}. Assume that $\widehat{W}\geq 0$ on $\mathbb{S}^{d-1}$.
Then for every $m>0$ there exists a minimiser $\rho_m\in \mathcal{A}_{m,1}$ of $\mathcal I$, with compact support, which is unique up to translations and is characterised by the following Euler-Lagrange conditions. There exists a constant $\lambda\in \R$ such that (except for $x$ in a set of measure zero)
\begin{align}\label{EL1-d}
\left(W\ast \rho_m  \right) (x) + m\, \frac{|x|^2}2&= \lambda \qquad \text{if } 0<\rho_m(x)<1,\\ \label{EL2-d}
\smallskip
\left(W\ast \rho_m   \right) (x)+ m\, \frac{|x|^2}2&\le \lambda \qquad \text{if } \rho_m(x)=1,\\ \label{EL3-d}
\smallskip
\left(W\ast \rho_m   \right) (x)+ m\, \frac{|x|^2}2&\ge \lambda \qquad \text{if } \rho_m(x)=0.
\end{align}
\end{prop}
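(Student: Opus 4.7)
The plan is to adapt the direct method of \cite{BCT18} to the anisotropic setting, using the Fourier representation $\int\int W(x-y)\rho(x)\rho(y)\,dxdy = \int \widehat W(\xi)|\widehat\rho(\xi)|^2\,d\xi$ to handle both lower semicontinuity and convexity. For existence, I would take a minimising sequence $(\rho_n)\subset\mathcal A_{m,1}$, translated so that each $\rho_n$ has barycentre at the origin; then the attractive part reduces to $m\int|x|^2\rho_n\,dx$, and since $\widehat W\geq 0$ on $\mathbb S^{d-1}$ forces $\int\int W(x-y)\rho_n(x)\rho_n(y)\,dxdy\geq 0$, the uniform bound on $\mathcal I(\rho_n)$ yields a uniform second-moment bound on $\rho_n$, hence tightness. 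Combined with $0\leq\rho_n\leq 1$ and $\|\rho_n\|_{L^1}=m$, this extracts a subsequence converging weakly in $L^1(\R^d)$ and weakly-$*$ in $L^\infty(\R^d)$ to some $\rho\in\mathcal A_{m,1}$. Lower semicontinuity of the repulsive term follows from its Fourier representation together with the weak-$L^2$ convergence of $\widehat\rho_n$ (using $\|\rho_n\|_{L^2}^2\leq m$), while the attractive term is actually continuous thanks to tightness. Compactness of the support of the resulting minimiser $\rho_m$ is then standard: any mass sitting in $\{|x|>R\}$ for $R$ large can be translated inward, strictly decreasing the attractive contribution by an amount linear in $R$, while the $W$-interaction is perturbed by a vanishing amount as $R\to\infty$, contradicting optimality.

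For uniqueness up to translations, the key point is that $\widehat W\geq 0$ makes the repulsive term a nonnegative quadratic form in $\rho$, and after fixing the barycentre at the origin the attractive part becomes the linear functional $m\int|x|^2\rho\,dx$. Thus $\mathcal I$ is convex on the barycentre-zero slice of $\mathcal A_{m,1}$. For two such minimisers $\rho_1,\rho_2$, the parallelogram identity for the quadratic form gives
\[
\int_{\R^d}\widehat W(\xi)|\widehat\rho_1(\xi)-\widehat\rho_2(\xi)|^2\,d\xi = 0,
\]
so $\widehat\rho_1=\widehat\rho_2$ almost everywhere on the nonempty open set $\{\widehat W>0\}\subset\R^d$. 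Since the $\rho_i$ have compact support, the $\widehat\rho_i$ are entire functions by Paley--Wiener, and analytic continuation forces $\widehat\rho_1\equiv\widehat\rho_2$, whence $\rho_1=\rho_2$.

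For the Euler--Lagrange characterisation, set $\mathcal V(x):=(W*\rho_m)(x)+\tfrac m2|x|^2$. Considering admissible one-sided variations $\rho_m+t\phi$ with $\phi\in L^\infty$ compactly supported, $\int\phi\,dx=0$, and $0\leq\rho_m+t\phi\leq 1$ for $t\in[0,t_0]$, the first-order optimality condition gives $\int\mathcal V(x)\phi(x)\,dx\geq 0$. Testing with $\phi$ alternately supported in $\{0<\rho_m<1\}$ (where perturbations of either sign are admissible), in $\{\rho_m=1\}$ (only $\phi\leq 0$), and in $\{\rho_m=0\}$ (only $\phi\geq 0$) produces a common constant $\lambda\in\R$ for which \eqref{EL1-d}--\eqref{EL3-d} hold a.e. The most delicate step in the whole argument is uniqueness in the degenerate case: when $\widehat W$ is only nonnegative and may vanish on a subset of $\mathbb S^{d-1}$, the convexity estimate alone forces $\widehat\rho_1=\widehat\rho_2$ merely on $\{\widehat W>0\}$, and lifting this to global equality crucially exploits the Paley--Wiener analyticity coming from the compact support established earlier.
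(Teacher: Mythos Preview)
Your strategy matches the paper's (which largely defers to \cite{CFT15,BCT18}): direct method for existence, strict convexity from $\widehat W\geq0$ for uniqueness, and one-sided variations for the Euler--Lagrange conditions. There is, however, a genuine gap in the two-dimensional existence step. You claim that $\widehat W\geq 0$ on $\mathbb S^{d-1}$ forces $\iint W(x-y)\rho(x)\rho(y)\,dxdy\geq 0$ via the identity $\iint W\rho\rho=c\int\widehat W\,|\widehat\rho|^2$. For $d=3$ this is correct, since $\widehat W(\xi)=\Psi(\xi)/|\xi|^2$ is locally integrable and the Plancherel computation goes through. For $d=2$, however, $|\xi|^{-2}$ is \emph{not} locally integrable near the origin, $\widehat\rho(0)=(2\pi)^{-1}m\neq0$, and the Fourier integral diverges; correspondingly, the logarithmic interaction $\iint(-\log|x-y|)\rho\rho$ can be made arbitrarily negative (take $\rho$ supported at large scale). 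So your second-moment bound, and with it tightness, does not follow as written when $d=2$. The fix is elementary but different: use the pointwise lower bound $W(x)\geq -\log^+|x|-C$ and absorb the logarithmic negativity into a small fraction of the quadratic confinement.

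The same issue affects your lower-semicontinuity step for the repulsive term in $d=2$: the Fourier identity is a convergent integral only on \emph{zero-mean} signed densities $\rho_1-\rho_2$, since then $\widehat{\rho_1-\rho_2}(\xi)=O(|\xi|)$ cancels the singularity of $\widehat W$ at the origin. This is exactly why your uniqueness argument is fine: there you apply the Fourier representation to $\rho_1-\rho_2$, and the Paley--Wiener step is correct (note that $\{\Psi>0\}$ is nonempty by \eqref{average-Psi}). For existence in $d=2$, lower semicontinuity should instead be obtained by splitting $W$ into a nonnegative part and a continuous, sublinearly growing remainder. Finally, the statement asks the Euler--Lagrange conditions to \emph{characterise} the minimiser; you argue only necessity, but sufficiency is immediate from the strict convexity you have already established.
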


\begin{proof}  For the proof of existence we refer to \cite[Theorem~2.1]{CFT15}, which treats the special case of power-law potentials, but can be adapted to the present case. In particular, minimisers have compact support, due to the growth of the attraction term (see also \cite[Lemma~4.4]{BCT18} and \cite[Proposition~2.4]{CPT-Preprint}). On densities with compact support and 
barycentre at the origin the energy $\mathcal I$ reduces to
$$
{\mathcal I}(\rho)=I_m(\rho)= \int_{\R^d}\int_{\R^d} W(x-y) \rho(x)\rho(y)\,d x dy +m\int_{\R^d}|x|^2 \rho(x)\,dx.
$$
Uniqueness, up to translations, follows from the strict convexity of $I_m$ in the larger class $m\mathcal{P}(\R^d)$ of measures with compact support and finite interaction energy, which contains the class of densities in $\mathcal{A}_{m,1}$ that are relevant for the minimisation problem.

For the derivation of the Euler-Lagrange conditions we proceed as in \cite[Lemma~4.2]{BCT18}.
In a nutshell, given a minimiser $\rho_m\in \mathcal{A}_{m,1}$ of $\mathcal I$, for $m>0$, we construct variations of $\rho_m$ by perturbing it by means of a nonnegative function (smaller than $1$) in the set $\mathcal{S}_0=\{x: \rho_m(x)=0\}$, and by means of a nonpositive function (larger than $-1$) in the set $\mathcal{S}_1=\{x: \rho_m(x)=1\}$, in such a way that the resulting competitor has still mass $m$. This procedure leads to the conditions \eqref{EL1-d}--\eqref{EL3-d}, which are necessary for the minimality of $\rho_m$. Sufficiency follows from the strict convexity of the energy $I_m$ in the (convex) class $\mathcal{A}_{m,1}$.
\end{proof}

Finally, the minimisation of $\mathcal I$ on sets can be reduced to the relaxed problem on $\mathcal{A}_{m,1}$, by \cite[Theorem~4.5]{BCT18}, which we state below.

\begin{theorem}\label{th:equivalence}
Let $W$ be as in \eqref{potdef-dd} with $\kappa$ satisfying Assumption \ref{assumption-k}. 
Then for every $m>0$ the energy $\mathcal I$ has a minimiser $\Omega\in \mathcal{A}_m$ if and only if the characteristic function $\chi_\Omega$ of $\Omega$ is a minimiser of $\mathcal I$ in $\mathcal{A}_{m,1}$.
\end{theorem}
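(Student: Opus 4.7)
My plan is to prove the forward direction by establishing the equality of infima
\[
\inf_{\mathcal A_m}\mathcal I = \inf_{\mathcal A_{m,1}}\mathcal I,
\]
from which the non-trivial implication is immediate: if $\Omega\in\mathcal A_m$ is a set minimiser, then $\mathcal I(\chi_\Omega)=\inf_{\mathcal A_m}\mathcal I=\inf_{\mathcal A_{m,1}}\mathcal I$, so $\chi_\Omega$ minimises $\mathcal I$ on the larger class. The reverse implication requires no relaxation argument: if $\chi_\Omega$ minimises $\mathcal I$ on $\mathcal A_{m,1}$, then since $\chi_{\Omega'}\in\mathcal A_{m,1}$ for every $\Omega'\in\mathcal A_m$, one has $\mathcal I(\chi_\Omega)\le\mathcal I(\chi_{\Omega'})$ for every $\Omega'\in\mathcal A_m$, whence $\Omega$ minimises $\mathcal I$ on $\mathcal A_m$.

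The inequality $\inf_{\mathcal A_m}\mathcal I\ge\inf_{\mathcal A_{m,1}}\mathcal I$ is obvious from the inclusion $\{\chi_{\Omega'}:\Omega'\in\mathcal A_m\}\subset\mathcal A_{m,1}$. For the substantive reverse inequality, given an arbitrary $\rho\in\mathcal A_{m,1}$ with finite energy, I would construct a sequence $\{\Omega_n\}\subset\mathcal A_m$ with $\chi_{\Omega_n}\rightharpoonup^\ast\rho$ in $L^\infty$ and $\mathcal I(\chi_{\Omega_n})\to\mathcal I(\rho)$. First I would reduce to the case of $\rho$ with uniformly compact support: the quadratic attraction forces mass to decay at infinity, so truncating $\rho$ outside a large ball (with a small mass-restoring correction inside) changes the energy by an arbitrarily small amount. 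Then at step $n$ I would partition $\R^d$ into cubes $\{Q_n^k\}_k$ of side $1/n$ and, inside each cube, replace $\rho\chi_{Q_n^k}$ by the characteristic function of an arbitrary measurable subset $A_n^k\subset Q_n^k$ of Lebesgue measure $\int_{Q_n^k}\rho$. The set $\Omega_n:=\bigsqcup_k A_n^k$ then belongs to $\mathcal A_m$, and a classical cube-averaging argument gives $\chi_{\Omega_n}\rightharpoonup^\ast\rho$ in $L^\infty$.

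For the convergence of the energies I would write $V(x):=W(x)+\tfrac12|x|^2$ and exploit the fact that $V\in L^1_{\mathrm{loc}}(\R^d)$: the singularity of $W$ at the origin is logarithmic in dimension two and of type $|x|^{-1}$ in dimension three, both locally integrable. Since the supports of all $\chi_{\Omega_n}$ and of $\rho$ are uniformly contained in a fixed ball and the densities are uniformly bounded by one, the convolutions $V\ast\chi_{\Omega_n}$ are equibounded and converge pointwise, hence in $L^p_{\mathrm{loc}}$ for every $p<\infty$, to $V\ast\rho$. Splitting $\mathcal I(\chi_{\Omega_n})=\int\chi_{\Omega_n}(V\ast\chi_{\Omega_n})\,dx$ as $\int\chi_{\Omega_n}(V\ast\rho)\,dx+\int\chi_{\Omega_n}\,V\ast(\chi_{\Omega_n}-\rho)\,dx$, the first summand converges to $\int\rho(V\ast\rho)\,dx=\mathcal I(\rho)$ by the weak-$\ast$ convergence tested against the fixed $L^1$ function $V\ast\rho$, and the second vanishes by dominated convergence, closing the argument.

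The main obstacle I anticipate is reconciling the approximation scheme with the singularity of $V$ at the diagonal and with the non-compactness of $\R^d$. On the former, one must ensure that the rapidly oscillating $\chi_{\Omega_n}$ does not generate uncontrolled contributions from $V$ near $x=y$; this is handled by the local integrability of $V$ combined with the uniform $L^\infty$ bound on the densities. On the latter, the initial truncation must preserve the mass constraint exactly, which can be arranged by a tiny mass-adjusting modification inside the ball whose effect on the energy vanishes in the limit. Both points are technical but standard, and once they are addressed they deliver the infimum equality, and hence the theorem.
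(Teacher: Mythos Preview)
Your approach is correct and follows the standard relaxation strategy. The paper does not give its own proof of this statement; it simply quotes \cite[Theorem~4.5]{BCT18}, where precisely the argument you outline is carried out: one shows $\inf_{\mathcal A_m}\mathcal I=\inf_{\mathcal A_{m,1}}\mathcal I$ by approximating an arbitrary density $\rho\in\mathcal A_{m,1}$ with finite energy by characteristic functions of sets built on a fine cube partition, and one passes to the limit in the energy using the local integrability of the kernel together with the uniform $L^\infty$ bound on the competitors. Your sketch identifies all the necessary ingredients, including the compact-support reduction and the pointwise-then-$L^p_{\rm loc}$ convergence of the convolutions, and the technical points you flag are indeed routine.
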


\subsection{The Fourier transform} The Fourier transform definition we adopt is 
$$
\widehat{f}(\xi) =\frac{1}{(2\pi)^{d/2}}\int_{\R^d} f(x) e^{-i \xi \cdot x}\, dx, \quad \xi \in \R^d,
$$
for functions $f$ in the Schwartz space $\mathcal{S}$.
Correspondingly, the inverse Fourier transform is the following:
\begin{equation}\label{inv-F}
f(x)=\frac{1}{(2\pi)^{d/2}} \int_{\R^d} \widehat{f}(\xi) e^{i \xi \cdot x}\, d\xi, \quad x \in \R^d.
\end{equation}

\subsection{Ellipsoids} For any $a\in \R^d$ with $a\cdot e_i=a_i$, $D:=D(a)$ stands for the diagonal matrix such that $D_{ii}=a_i$. Given $a\in \R^d$ with $a_i>0$, we let 
\begin{equation}\label{eld}
E_0(a):= \left\{x\in \R^d:\ \sum_{i=1}^d\frac{x_i^2}{a_i^2} \le 1\right\}
\end{equation}
denote the compact set enclosed by the ellipsoid with semi-axes of length $a_i$ on the coordinate axis $e_i$. Note that 
\begin{equation*}
E_0(a) = D(a)\overline{B},
\end{equation*}
where $B$ denotes the unit ball $B_1(0)\subset\R^d$.
A general ellipsoid $E\subset \R^d$ centred at the origin can be then obtained by rotating $E_0(a)$ in \eqref{eld} with respect to the coordinate axes, namely as
\begin{equation}\label{ellrotd}
E= R E_0(a)=R D(a) \overline{B},
\end{equation}
for some rotation $R\in SO(d)$.

In the following we recall the Fourier transform of the (normalised) characteristic function of an ellipsoid. We refer to \cite{Mora-Muenster, MMRSV-3d} for the detailed computations in the cases $d=2$ and $d=3$ respectively, and we only summarise the results we need here. 

The Fourier transform of the characteristic function of $B$ in $\R^d$ is given by
\begin{equation}\label{Jalpha}
\widehat{\chi_{B}}(\xi)=\frac{J_{d/2}(|\xi|)}{|\xi|^{d/2}},
\end{equation}
where $J_{\alpha}$ denotes the Bessel function of the first kind of order $\alpha$.
We recall that $\frac{J_{d/2}(r)}{r^{d/2}}$ behaves as a constant at the origin, and as $r^{-(d+1)/2}$ at infinity.

Let $E$ be an ellipsoid of the form \eqref{ellrotd}. It is immediate to see that
\begin{equation}\label{chiE-chiBd}
\widehat{\chi_E}(\xi)=\frac{|E|}{|B|}\widehat{\chi_{B}}(D(a)R^T\xi).
\end{equation}

\subsection{The potential}\label{sec:potential-dd}

Let $W$ be as in \eqref{potdef-dd} with $\kappa$ satisfying Assumption \ref{assumption-k}. We recall that, for $\xi\in \R^d\setminus\{0\}$,
$$
\widehat{W}(\xi) = \frac{1}{|\xi|^2}\widehat{W}\left(\frac{\xi}{|\xi|}\right), 
$$
namely the Fourier transform of $W$ is $(-2)$-homogenous (see \cite{Mora-Muenster, MMRSV-3d}). We define the $0$-homogeneous function $\Psi: \R^d\setminus\{0\}\to \R$ given by $\Psi(y):=\widehat W(y)$ for $y\in \mathbb{S}^{d-1}$, so that 
\begin{equation}\label{FTW-dd}
\widehat{W}(\xi) = \frac{1}{|\xi|^2}\Psi(\xi), \quad \text{for } \xi \neq 0.
\end{equation}
Let $E$ be an ellipsoid of the form \eqref{ellrotd}. A crucial step in our approach is to resort to a Fourier representation of the potential $W\ast \chi_E$, which features in the Euler-Lagrange conditions \eqref{1}--\eqref{3}. Intuitively, one would like to use the inversion formula \eqref{inv-F} to write, for  $x \in \R^d$,
\begin{align*}
(W\ast\chi_E)(x) = \frac{1}{(2\pi)^{d/2}} \int_{\R^d} \widehat{W\ast \chi_E}(\xi) e^{i \xi \cdot x}\, d\xi 
 = 
 \int_{\R^d} \widehat{W}(\xi)\widehat{\chi_E}(\xi) \cos(\xi \cdot x)\, d\xi,
\end{align*}
and then use \eqref{Jalpha}--\eqref{FTW-dd} to make the expression above more explicit. While this is possible for $d=3$, due to a good integrability of $\widehat{W\ast \chi_E}$ (see \cite[Section~2.5]{MMRSV-3d}), in the two-dimensional case we need to proceed slightly differently, by applying the inversion formula to the gradient of the potential, rather than the potential itself, to gain integrability. Namely we compute for $x\in \R^2$ 
\begin{align*}
(\nabla W\ast \chi_E)(x)=\int_{\R^2}\widehat{\nabla W}(\xi)\widehat{\chi_E}(\xi)e^{ix\cdot\xi}\, d\xi=-\int_{\R^2}\xi \widehat{W}(\xi)\widehat{\chi_E}(\xi)\sin(x\cdot\xi)\, d\xi,
\end{align*}
see \cite{Mora-Muenster}. The expression of the potential on $E$ is given by
\begin{align}\label{pot-dd}
(W\ast\chi_E)(x) = 
\begin{cases}
\medskip
\displaystyle\ -\frac{|E|}{2|B|} \int_{\mathbb{S}^1}\alpha^2(x,y) \Psi(y)\,d\mathcal{H}^1(y) + c &\quad \text{for } d=2,\\
\displaystyle\frac{|E|}{2|B|}\sqrt{\frac{\pi}2}
\int_{\mathbb{S}^2}(1-\alpha^2(x,y))\,\frac{{\Psi}(y)}{{|D(a)R^Ty|}}\,d\mathcal{H}^2(y) &\quad \text{for } d=3,
\end{cases}
\end{align}
for $x\in E$, and for some constant $c\in \R$, where 
\begin{equation}\label{def-alpha}
\alpha(x,y):=\frac{x\cdot y}{|D(a)R^Ty|}.
\end{equation}
The gradient of the potential, instead, satisfies for any $x\in E^c$
\begin{align}\label{potgrad-dd}
&x\cdot\nabla(W\ast\chi_E)(x)\nonumber\\
= 
&\begin{cases}
\medskip
\displaystyle -\frac{|E|}{|B|}\int_{\mathbb{S}^1} 
\Psi(y) \left(
\alpha^2 \chi_{(-1,1)}(\alpha)+\frac{|\alpha|}{|\alpha|+\sqrt{\alpha^2-1}}\chi_{(-1,1)^c}(\alpha)
\right) d\mathcal{H}^1(y)&\quad \text{for } d=2,\\
\displaystyle-\frac{|E|}{|B|}\sqrt{\frac{\pi}2}
\int_{\mathbb{S}^2}\alpha^2\chi_{(-1,1)}(\alpha)\frac{{\Psi}(y)}{{|D(a)R^Ty|}}\,d\mathcal{H}^2(y) &\quad \text{for } d=3.
\end{cases}
\end{align}

\section{Proofs of our main results}\label{sec:main-res}
Proposition~\ref{ex+uniq-d} ensures that the energy $\mathcal I$ admits a minimiser in the class of densities $\mathcal{A}_{m,1}$ for every $m>0$. We recall that our aim is to consider the minimisation problem for $\mathcal I$ in the  smaller class $\mathcal{A}_m$ of sets of mass $m$. 

\subsection{The non-degenerate case: Proof of Theorem~\ref{Mainthm}}\label{sec:nd} In this section we assume that $\widehat{W}> 0$ on $\mathbb{S}^{d-1}$, hence we work in the non-degenerate case.
We now show that the minimiser provided by Proposition~\ref{ex+uniq-d} is the characteristic function of a set, and hence by Theorem~\ref{th:equivalence} it is a solution of the original problem in $\mathcal{A}_{m}$, only for masses $m$ above a given threshold.

\subsubsection{Proof of Theorem~\ref{Mainthm}: The critical and subcritical cases $m\leq m^\ast$.}\label{sect:sub}  We recall that under the non-degeneracy assumption $\widehat{W}> 0$ on $\mathbb{S}^{d-1}$ the 
unique minimiser, up to translations, of $\mathcal I$ on $\mathcal{P}(\R^d)$ is a (normalised) $d$-dimensional ellipsoid, both for $d=2$ (see \cite{CS2d,Mora-Muenster}) and $d=3$ (see \cite{CS3d,MMRSV-3d}).
We denote with $E^\ast:=R^*D(a^\ast)\overline B\subset \R^d$, for $a^\ast_i>0$, and $R^\ast\in SO(d)$, the ellipsoid such that 
$\mu^*=\chi_{E^*}/|E^*| \in \mathcal{P}(\R^d)$ is the minimiser of $\mathcal I$ on $\mathcal{P}(\R^d)$ with zero barycentre. We set $m^*:= |E^*|=|B| \prod_{i=1}^d a^*_i$.

For future reference we note that by strict convexity $\mu^*$ is also the unique measure satisfying the Euler-Lagrange conditions for the minimality of $\mathcal I$ on $\mathcal{P}(\R^d)$, namely
\begin{align}\label{old-EL1-d}
&(W\ast \mu^*)(x)+ \frac{|x|^2}2 = \lambda  \qquad \text{if } x\in \supp\mu^*,\\\label{old-EL2-d}
&(W\ast \mu^*)(x)+  \frac{|x|^2}2 \geq \lambda  \qquad \text{if } x\in \R^d\setminus \supp\mu^*,
\end{align}
for a constant $\lambda\in \R$.   

For every $m>0$ the measure $\mu_m:=\frac{m}{m^\ast}\chi_{E^\ast}$ is the unique minimiser of $\mathcal I$ on $m\mathcal{P}(\R^d)$, up to translations. This follows immediately from the quadratic behaviour of $\mathcal I(\mu)$ with respect to $\mu$.

Let now $m\leq m^*$. Since the function $\rho_{m}:=\mu_{m}=\frac{m}{m^\ast}\chi_{E^\ast}$ belongs to $\mathcal{A}_{m,1}$ in this case, we conclude that $\rho_{m}$ is the unique minimiser of $\mathcal I$ in the class $ \mathcal{A}_{m,1}$, up to translations. For $m=m^*$ this implies that $E^\ast\in \mathcal{A}_{m^\ast}$ is the unique minimiser of $I_{m^\ast}$  
in the class $ \mathcal{A}_{m^\ast}$, up to translations. For $m< m^\ast$ this proves non-existence of minimisers of $\mathcal I$ on $\mathcal A_m$ by Theorem~\ref{th:equivalence}, since 
$\rho_{m}$ is not the characteristic function of a~set.

Note that for $m>m^\ast$ the measure $\mu_m\in m \mathcal{P}(\R^d)\setminus  \mathcal{A}_{m,1}$, so one has to proceed differently to identify the minimiser of $\mathcal I$ in the class $\mathcal{A}_{m,1}$.

\subsubsection{Proof of Theorem~\ref{Mainthm}: The super-critical case $m> m^\ast$.}

We use the Euler-Lagrange conditions \eqref{EL1-d}--\eqref{EL3-d}, that we specialise in the case of $\rho=\chi_E$, with $E= R D(a)\overline B$, for $R\in SO(d)$ and $a_i>0$, satisfying $|E|=m$. More precisely, we show that there exist $R\in SO(d)$, and  $a\in \R^d$, with $a_i>0$ and $|B| \prod_{i=1}^d a_i=m$, such that 

\begin{align}\label{EL1E-d}
(W\ast \chi_E) (x) + m\frac{|x|^2}2&= \lambda \qquad \text{if } x\in \partial E,\\ \label{EL2E-d}
\smallskip
(W\ast \chi_E) (x) + m\frac{|x|^2}2&\le \lambda \qquad \text{if } x\in E^\circ,\\ 
\smallskip
(W\ast \chi_E) (x) + m\frac{|x|^2}2&\ge \lambda \qquad \text{if } x\in \R^d\setminus E, \label{EL3E-d}
\end{align}
for a constant $\lambda\in \R$.


We divide the proof into  a number of steps.\smallskip

\noindent
\textbf{Step~1: Conditions \eqref{EL1E-d}--\eqref{EL2E-d} imply \eqref{EL3E-d}.}
Assume that there exist $a_i>0$ with $|B| \prod_{i=1}^d a_i=m$, and $R\in SO(d)$, such that $E=R D(a)\overline B$ satisfies \eqref{EL1E-d}--\eqref{EL2E-d}, and let $x\in \R^d\setminus E$. 

We write $x=tx_0$, with $x_0\in\partial E$, and $t>1$. Since $x_0\in\partial E$, by \eqref{EL1E-d}--\eqref{EL2E-d}
we know that $x_0\cdot \nabla(W\ast\chi_E)(x_0)+m|x_0|^2\geq0$. We next treat the cases $d=2$ and $d=3$ separately.\smallskip

\textit{Step~1.1: The two-dimensional case.} By \eqref{pot-dd}, condition $x_0\cdot \nabla(W\ast\chi_E)(x_0)+m|x_0|^2\geq0$  becomes 
\begin{equation*}
-\frac{m}{|B|}
\int_{\mathbb{S}^1}\alpha^2(x_0,y) \Psi(y)\,d\mathcal{H}^1(y)+m|x_0|^2\geq0,
\end{equation*}
where $\alpha(x,y)$ is defined in \eqref{def-alpha}. This implies in particular that 
$$
|x_0|^2 \geq \frac1{|B|}\int_{\mathbb{S}^1}\alpha^2(x_0,y) \Psi(y)\,d\mathcal{H}^1(y),
$$%
and hence 
\begin{equation}\label{eq10}
|x|^2 =t^2|x_0|^2 \geq \frac{t^2}{|B|}\int_{\mathbb{S}^1}\alpha^2(x_0,y) \Psi(y)\,d\mathcal{H}^1(y)
= \frac{1}{|B|}\int_{\mathbb{S}^1}\alpha^2(x,y) \Psi(y)\,d\mathcal{H}^1(y),
\end{equation}
since $t^2 \alpha^2(x_0,y) = \alpha^2(tx_0,y)= \alpha^2(x,y)$. Then, by \eqref{potgrad-dd} and \eqref{eq10}
\begin{align*}
x\cdot \left(\nabla W\ast \chi_E   \right) (x)+ m\, |x|^2
 \geq &  -\frac{m}{|B|}\int_{\mathbb{S}^1} 
\Psi(y) \left(
\alpha^2 \chi_{(-1,1)}+\frac{|\alpha|}{|\alpha|+\sqrt{\alpha^2-1}}\chi_{(-1,1)^c}
\right) d\mathcal{H}^1(y)\\
&\quad + \frac{m}{|B|}\int_{\mathbb{S}^1}\alpha^2\Psi(y)\,d\mathcal{H}^1(y)\\
&= \frac{m}{|B|}\int_{\mathbb{S}^1} \Psi(y) \chi_{(-1,1)^c}(\alpha) \left(\alpha^2- \frac{|\alpha|}{|\alpha|+\sqrt{\alpha^2-1}}\right) d\mathcal{H}^1(y)\geq 0.
\end{align*}
This implies \eqref{EL3E-d} and concludes the proof.\smallskip

\textit{Step~1.2: The three-dimensional case.} By \eqref{pot-dd}, condition $x_0\cdot \nabla(W\ast\chi_E)(x_0)+m|x_0|^2\geq0$  becomes 
$$
-\frac{m}{|B|}\sqrt{\frac{\pi}2}
\int_{\mathbb{S}^2}\alpha^2(x_0,y)\frac{{\Psi}(y)}{{|D(a)R^Ty|}}\,d\mathcal{H}^2(y)+m|x_0|^2\geq 0.
$$
This implies in particular that 
\begin{align*}
|x|^2 =t^2|x_0|^2 &\geq \frac{t^2}{|B|}\sqrt{\frac{\pi}2}
\int_{\mathbb{S}^2}\alpha^2(x_0,y)\frac{{\Psi}(y)}{{|D(a)R^Ty|}}\,d\mathcal{H}^2(y)\\
&= \frac{1}{|B|}\sqrt{\frac{\pi}2}\int_{\mathbb{S}^2}\alpha^2(x,y)\frac{{\Psi}(y)}{{|D(a)R^Ty|}}\,d\mathcal{H}^2(y),
\end{align*}
since $t^2 \alpha^2(x_0,y) = \alpha^2(tx_0,y)= \alpha^2(x,y)$. 

Then, by \eqref{potgrad-dd} and by the inequality above
\begin{align*}
x\cdot \left(\nabla W\ast \chi_E   \right) (x)+ m\, |x|^2
 \geq & -\frac{m}{|B|}\sqrt{\frac{\pi}2}
\int_{\mathbb{S}^2}\alpha^2(x,y)\chi_{(-1,1)}(\alpha)\frac{{\Psi}(y)}{{|D(a)R^Ty|}}\,d\mathcal{H}^2(y)\\
&\quad +  \frac{m}{|B|}\sqrt{\frac{\pi}2}\int_{\mathbb{S}^2}\alpha^2(x,y)\frac{{\Psi}(y)}{{|D(a)R^Ty|}}\,d\mathcal{H}^2(y)\\
&= \frac{m}{|B|}\sqrt{\frac{\pi}2}
\int_{\mathbb{S}^2}\alpha^2(x,y)\chi_{(-1,1)^c}(\alpha)\frac{{\Psi}(y)}{{|D(a)R^Ty|}}\,d\mathcal{H}^2(y)\geq 0.
\end{align*}
This implies \eqref{EL3E-d} and concludes the proof of this step.

Note that this step is valid also in the degenerate case $\widehat W\geq0$ on $\mathbb S^{d-1}$.\smallskip

\noindent
\textbf{Step~2: Rewriting conditions \eqref{EL1E-d}--\eqref{EL2E-d}.} Let $\mathbb{M}_+$ denote the space of positive definite symmetric $d\times d$ matrices. We set $M:= RD^2R^T$, where $D^2=D(a^2)$. Note that $M\in \mathbb{M}_+$, and $\det M=\prod_{i=1}^d a_i^2=\frac{m^2}{|B|^2}$, since $m=|E|=|B| \prod_{i=1}^d a_i$.

We now rewrite the potential on $E$ in \eqref{pot-dd} in terms of $M$. First note that  
\begin{align}\label{Da-M}
|D(a)R^Ty| &= (D(a)R^Ty\cdot D(a)R^Ty)^{1/2}=((D(a)R^T)^T D(a)R^Ty\cdot y)^{1/2} \nonumber\\
&=(RD(a^2)R^Ty\cdot y)^{1/2} = (My\cdot y)^{1/2}.
\end{align}
Hence, from \eqref{pot-dd}, the potential in terms of $M$ is 
\begin{equation}\label{WEd}
(W\ast \chi_E)(x)=-\frac{m}{2|B|}\gamma_d \int_{\mathbb{S}^{d-1}} \frac{(x\cdot y)^2\Psi(y)}{(My\cdot y)^{d/2}}\,d\mathcal{H}^{d-1}(y)+c,
\end{equation}
for $x\in E$, for some constant $c\in \R$, where $\gamma_d$ is a constant defined as 
\begin{equation}\label{def-gamma}
\gamma_d:=
\begin{cases}
\medskip
1 &\quad \text{for } d=2,\\
\displaystyle\sqrt{\frac{\pi}2} &\quad \text{for } d=3.
\end{cases}
\end{equation}
More concisely, let $Q\in \R^{d\times d}$  be defined as the matrix with components $Q_{ij}:=q_{ij}$,
where 
\begin{align}\label{AAc-d}
q_{ij}:=-\frac{m}{2|B|}\gamma_d \int_{\mathbb{S}^{d-1}} \frac{\Psi(y)y_iy_j}{(My\cdot y)^{d/2}}\,d\mathcal{H}^{d-1}(y).
\end{align}
Note that $Q$ depends on the potential $W$ and on the matrix $M$, but we will not indicate this dependence explicitly to avoid overburdening the notation. 
Then we can write \eqref{WEd}~as 
\begin{equation}\label{quad:pot}
(W\ast \chi_E)(x) = Qx\cdot x + c
\end{equation}
for $x\in E$, and for some constant $c\in \R$. Hence we can rewrite \eqref{EL1E-d}--\eqref{EL2E-d} in terms of $Q$ as 
\begin{align}\label{EL1E-A-d}
\left(Q+\frac m2 I\right)x\cdot x&= \lambda \qquad \text{if } x\in \partial E,\\ \label{EL2E-A-d}
\medskip
\left(Q+\frac m2 I\right)x\cdot x&\le \lambda \qquad \text{if } x\in E^\circ,
\end{align}
for some constant $\lambda\in \R$ (not renamed, corresponding to $\lambda-c$, where $\lambda$ is the constant in \eqref{EL1E-d}--\eqref{EL2E-d}).

Note that, by \eqref{ellrotd}, $x\in  E$ if and only if $x=RD(a)z= M^{1/2}Rz$, with $z\in \overline B$, where $M^{1/2}:=R D(a)R^T$. In fact, by setting $\xi:=R z$, we can write $x=M^{1/2}\xi$, with $\xi\in \overline B$. Then, for $x\in E$, we have 
\begin{align}\label{A-sqM3}
\left(Q+\frac m2 I\right)x\cdot x = M^{1/2}\left(Q+\frac m2 I\right)M^{1/2}\xi\cdot \xi,
\end{align}
with $\xi\in \overline B$. By \eqref{A-sqM3}, conditions \eqref{EL1E-A-d}--\eqref{EL2E-A-d} are equivalent to 
\begin{align}\label{EL1E-M-d}
M^{1/2}\left(Q+\frac m2 I\right)M^{1/2}\xi\cdot \xi&= \lambda \qquad \text{if } \xi\in \partial B,\\ 
\label{EL2E-M-d}
\medskip
M^{1/2}\left(Q+\frac m2 I\right)M^{1/2}\xi\cdot \xi&\le \lambda \qquad \text{if } \xi\in B.
\end{align}
Since condition \eqref{EL1E-M-d} is equivalent to 
\begin{equation}\label{AM-inv-d}
M^{1/2}\left(Q+\frac m2 I\right)M^{1/2} = \lambda I,
\end{equation}
condition \eqref{EL2E-M-d} rewrites as 
\begin{equation*}
\lambda\xi\cdot \xi \leq \lambda \quad \text{if } \xi\in B,
\end{equation*}
that is, $\lambda\geq 0$.

In conclusion, finding an ellipsoid $E$ satisfying \eqref{EL1E-d}--\eqref{EL2E-d} is equivalent to finding $M\in \mathbb{M}_+$, with $\det M=\frac{m^2}{|B|^2}$, and satisfying \eqref{AM-inv-d} with $\lambda\geq 0$, namely satisfying 
\begin{equation}\label{rewritingEL-d}
Q+\frac m2 I = \lambda M^{-1}, \quad \lambda\geq 0.
\end{equation}\smallskip

\noindent
\textbf{Step~3: Rewriting conditions \eqref{rewritingEL-d} as a constrained minimisation problem.} By writing explicitly
$$
M^{-1}=\frac{1}{\det M} \text{adj}(M),
$$
where $\text{adj}(M)$ denotes the adjugate of $M$, we obtain the following scalar conditions, equivalent to \eqref{rewritingEL-d}:
\begin{align}\label{components1-d}
&q_{ij}+\frac m2\delta_{ij} = \frac{\lambda}{\det M} (\text{adj}(M))_{ij}, \quad \text{for } i,j=1,\dots, d,  \\\label{components3-d}
&\lambda\geq 0.
\end{align}
Finally, by setting $\tilde \lambda:=\frac{2\lambda}{m \det M}$, and by using \eqref{AAc-d}, conditions \eqref{components1-d}--\eqref{components3-d} become 
\begin{align}\label{scalar1-d}
&-\frac{1}{|B|}\gamma_d \int_{\mathbb{S}^{d-1}} \frac{\Psi(y)y_jy_j}{(My\cdot y)^{d/2}}\,d\mathcal{H}^{d-1}(y)+\delta_{ij}=\tilde \lambda (\text{adj}(M))_{ij}, \quad i,j=1,\dots,d,
\\\label{scalar3-d}
&\quad \tilde\lambda \geq0,
\end{align}
where $M\in \mathbb{M}_+$ satisfies $\det M=\frac{m^2}{|B|^2}$.

We now show that \eqref{scalar1-d}--\eqref{scalar3-d} can be written as the stationarity condition for an auxiliary scalar function defined on matrices $M\in \mathbb{M}_+$, under the determinant constraint $\det M=\frac{m^2}{|B|^2}$. To this aim we introduce the continuous function $f:\mathbb{M}_+\to \R$ defined as 
\begin{equation}\label{def-fg}
f(M):= g_d(M)+ \mathrm{tr}(M), 
\end{equation}
where
\begin{equation}\label{def-f-d}
g_d(M):=
\begin{cases}
\medskip
\displaystyle -\frac{1}{|B|}\int_{\mathbb{S}^1}\Psi(y)\log(My\cdot y)\,d\mathcal{H}^1(y) &\quad \text{for } d=2,\\
\displaystyle\frac{2\gamma_3}{|B|}\int_{\mathbb{S}^2}\frac{\Psi(y)}{\sqrt{My\cdot y}}\,d\mathcal{H}^2(y)  &\quad \text{for } d=3.
\end{cases}
\end{equation}
One can show that $f$ is nothing but the nonlocal energy $\mathcal I$ evaluated at normalised characteristic functions of ellipsoids, see Lemma~\ref{lemma}.
Then \eqref{scalar1-d}--\eqref{scalar3-d} can be rewritten as
\begin{align}\label{new-claim-d}
\nabla_M f(M)=\tilde \lambda \nabla_M\det(\cdot)(M), \quad 
\tilde\lambda \geq0,
\end{align}
where we denoted
$$\nabla_M=
\begin{cases}
\medskip
\displaystyle\left(\frac{\partial}{\partial M_{11}}, \frac{\partial}{\partial M_{22}},\frac{\partial}{\partial M_{12}}\right)&\quad \text{for } d=2,\\\displaystyle\left(\frac{\partial}{\partial M_{11}},\frac{\partial}{\partial M_{22}},\frac{\partial}{\partial M_{33}},\frac{\partial}{\partial M_{12}},\frac{\partial}{\partial M_{13}},\frac{\partial}{\partial M_{23}}\right) &\quad \text{for } d=3,\\
\end{cases}
$$
since due to the symmetry of $M$
$$
\frac{\partial}{\partial M_{ij}} \det(\cdot)(M)= (\text{adj}(M))_{ji}=(\text{adj}(M))_{ij},\quad i,j=1,\dots,d.
$$

We have then reduced the claim of finding an ellipsoid $E$ satisfying \eqref{EL1E-d}--\eqref{EL2E-d} to finding $M\in \mathbb{M}_+$ with $\det M=\frac{m^2}{|B|^2}$, satisfying \eqref{new-claim-d}.\smallskip

\textit{Step~3.1: Auxiliary constrained minimisation problem for $f$.} We now show that the function $f$ in \eqref{def-fg} admits a minimiser in the (larger) set 
\begin{equation}\label{unilater}
\mathcal{M}:= \left\{M\in \mathbb{M}_+: \ \det M\geq \frac{m^2}{|B|^2}\right\},
\end{equation}
where we replaced the equality constraint for the determinant of $M$ with a unilateral constraint.

To this aim, let $(M_n)_n\subset \mathcal{M}$ be a minimising sequence for $f$, where we write $M_n=: R_nD(a_n^2)R_n^T$, with $R_n\in SO(d)$ and $a_n\in \R^d$ with $a_n\cdot e_i>0$. Since 
$$
\inf_{\mathcal{M}} f \leq f \bigg(\bigg(\frac{m}{|B|}\bigg)^{2/d} I\bigg)<+\infty,
$$
we have that $f(M_n)\leq c<\infty$, for some constant $c\in \R$. We now show that 
\begin{align}\label{en:bound}
c\geq f(M_n) \geq C |M_n|,
\end{align}
where $C>0$, and $|\cdot|$ denotes the Frobenius norm of the matrix. Then 
$$
|M_n|^2=\text{tr}(M_n^TM_n)=\text{tr}(D(a_n^4))=\sum_{i=1}^d(a_n)_i^4,
$$
and since 
$$
\mathrm{tr}(M_n)=\sum_{i=1}^d(a_n)_i^2,
$$
we immediately deduce that  
\begin{equation}\label{norm-trace}
|M_n|\leq\text{tr}(M_n).
\end{equation}
In the case $d=3$, since $g_3(M_n)\geq 0$, the bound \eqref{norm-trace} gives immediately \eqref{en:bound}. 

If $d=2$, we have that  
\begin{align}\label{estimateg_2}
g_2(M_n) &= -\frac{1}{|B|}\int_{\mathbb{S}^1}\Psi(y)\log\left(\frac{M_n}{|M_n|}y\cdot y\right)\,d\mathcal{H}^1(y)
-\log(|M_n|)\left(\frac{1}{|B|}
\int_{\mathbb{S}^1} \Psi(y)d\mathcal{H}^1(y)
\right)
\nonumber
\\
&\geq  -\log(|M_n|)\left(\frac{1}{|B|}
\int_{\mathbb{S}^1} \Psi(y)d\mathcal{H}^1(y)
\right),
\end{align}
where for the last inequality we have used that for every $y\in \mathbb{S}^1$
$$
0<\frac{M_n}{|M_n|}y\cdot y \leq 1,
$$
and hence its logarithm is negative. We also recall that 
\begin{equation}\label{average-Psi}
\int_{\mathbb{S}^1} \Psi(y)d\mathcal{H}^1(y) = 2\pi
\end{equation}
(see, e.g., \cite[eq.\ (3.6)]{Mora-Muenster}),
and so, from \eqref{estimateg_2} and \eqref{norm-trace}, in the case $d=2$ we obtain the estimate 
\begin{align*}
c\geq f(M_n) &= g_2(M_n)+\text{tr}(M_n) \geq -2\log(|M_n|)
+ |M_n| \geq \frac{|M_n|}4.
\end{align*}
Hence, also in this case we obtain the bound \eqref{en:bound}.

Both for $d=2$ and $d=3$, up to a subsequence, we have that $M_n\to M_0$ as $n\to +\infty$, where $M_0\in \R^{d\times d}$ is symmetric, but a priori only semi-positive definite; however, since it satisfies the unilateral constraint $\det M\geq \frac{m^2}{|B|^2}$, we have that in fact $M_0\in \mathcal{M}$. Since the function $f$ in \eqref{def-fg} is continuous, we have proved that $f$ admits a minimiser in~$\mathcal M$.\smallskip

\textit{Step~3.2: Conclusion.} By the Karush-Kuhn-Tucker conditions for the auxiliary constrained minimisation problem in Step~3.1, we have that any minimiser of $f$ in the set $\mathcal{M}$ satisfies 
\begin{align}\label{new-claim-KKT-1d}
&\nabla_M f(M)=\tilde \lambda \nabla_M\det(\cdot)(M), \\
\label{new-claim-KKT-2d}
&\tilde\lambda \geq0, \quad 
\det M\geq \frac{m^2}{|B|^2}, \quad 
\tilde \lambda\left(\det M-\frac{m^2}{|B|^2}\right)=0.
\end{align}
To conclude the proof of the theorem it suffices to show that it must be $\det M=\frac{m^2}{|B|^2}$.

Assume, for contradiction, that $\det M>\frac{m^2}{|B|^2}$. Then by \eqref{new-claim-KKT-2d} we deduce that $\tilde \lambda=0$, and hence $\nabla_M f(M)=0$ by \eqref{new-claim-KKT-1d}. 
In other words, $M$ is a critical point of $f$. This implies that the corresponding ellipsoid $E$ satisfies \eqref{EL1E-d} and \eqref{EL2E-d} with the equality sign, and by Step~1 it satisfies
\eqref{EL3E-d}, too. Therefore, the measure $\chi_E/|E|$ is a solution of \eqref{old-EL1-d}--\eqref{old-EL2-d}. Hence, by uniqueness $E=E^*$ and
$$
\det M = \prod_{i=1}^da_i^2 = \frac{(m^*)^2}{|B|^2}.
$$
But this is in contradiction with the assumption $\det M>\frac{m^2}{|B|^2}$, since we are dealing with the case $m>m^*$.\smallskip

\textbf{Step~4: The limiting case $m\to + \infty$.}  Let $M= RD(a^2)R^T\in \mathbb{M}_+$ be the matrix corresponding to the ellipsoid $E$ that solves \eqref{EL1E-d}--\eqref{EL3E-d}, where $R\in SO(d)$, and  $a\in \R^d$, with $a_i>0$ and $|B| \prod_{i=1}^d a_i=m$. By \eqref{AM-inv-d}, since $M^{1/2}=R D(a)R^T$, we deduce that 
\begin{equation}\label{interm}
RD(a)R^T\left(Q+\frac m2 I\right)RD(a)R^T = \lambda I,
\end{equation}
By a simple change of variables we can write $Q=\frac m2(RPR^T)$, where $P=(p_{ij})$ is defined as 
\begin{align}\label{Pc-d}
p_{ij}:=-\frac{1}{|B|}\gamma_d \int_{\mathbb{S}^{d-1}} \frac{\Psi(Ry)y_iy_j}{(D(a^2)y\cdot y)^{d/2}}\,d\mathcal{H}^{d-1}(y).
\end{align}
Rewriting \eqref{interm} in terms of $P$ gives 
\begin{equation}\label{PM-inv-d}
\frac m2(D(a)PD(a)+ D(a^2)) = \lambda I.
\end{equation}
For the diagonal components of \eqref{PM-inv-d} we find 
$$
-\frac{1}{|B|}\gamma_d \int_{\mathbb{S}^{d-1}} \frac{\Psi(Ry)a_i^2 y_i^2}{(D(a^2)y\cdot y)^{d/2}}\,d\mathcal{H}^{d-1}(y)+a_i^2 =\frac{2\lambda}m.
$$
By subtracting the component $j$ from the component $i$ in the expression above and by rearranging we obtain 
\begin{align*}
a_i^2-a_j^2\leq\frac{\gamma_d}{|B|} \int_{\mathbb{S}^{d-1}} \frac{\Psi(Ry)}{(D(a^2)y\cdot y)^{\frac d2-1}}\,d\mathcal{H}^{d-1}(y) - \frac{\gamma_d}{|B|} \int_{\mathbb{S}^{d-1}} \frac{\Psi(Ry) \sum_{k\neq i}a_k^2y_k^2}{(D(a^2)y\cdot y)^{d/2}}\,d\mathcal{H}^{d-1}(y)
\end{align*}
and 
\begin{align*}
a_i^2-a_j^2\geq-\frac{\gamma_d}{|B|} \int_{\mathbb{S}^{d-1}} \frac{\Psi(Ry)}{(D(a^2)y\cdot y)^{\frac d2-1}}\,d\mathcal{H}^{d-1}(y) + \frac{\gamma_d}{|B|} \int_{\mathbb{S}^{d-1}} \frac{\Psi(Ry) \sum_{k\neq j}a_k^2y_k^2}{(D(a^2)y\cdot y)^{d/2}}\,d\mathcal{H}^{d-1}(y),
\end{align*}
hence
$$
|a_i^2-a_j^2| \leq \frac{\gamma_d}{|B|} \int_{\mathbb{S}^{d-1}} \frac{\Psi(Ry)}{(D(a^2)y\cdot y)^{\frac d2-1}}\,d\mathcal{H}^{d-1}(y).
$$
Let $t\in \R^d$ denote the vector with components $t_i:= \left(\frac{|B|}m\right)^{\frac1d}a_i$. In terms of $t$, the estimate above becomes 
\begin{equation}\label{estimate:rescaled-axes}
|t_i^2-t_j^2| \leq \frac{\gamma_d}{m} \int_{\mathbb{S}^{d-1}} \frac{\Psi(Ry)}{(D(t^2)y\cdot y)^{\frac d2-1}}\,d\mathcal{H}^{d-1}(y).
\end{equation}
We now show that the integral in \eqref{estimate:rescaled-axes} is bounded uniformly in $m$. Since $M$ is the matrix corresponding to the ellipsoid $E$ that solves \eqref{EL1E-d}--\eqref{EL3E-d}, we have by Step~3 that 
$$
c|M|\leq f(M)= \min_{\mathcal{M}} f \leq f \bigg(\bigg(\frac{m}{|B|}\bigg)^{2/d} I\bigg) = g_d\bigg(\bigg(\frac{m}{|B|}\bigg)^{2/d} I\bigg)+d\bigg(\frac{m}{|B|}\bigg)^{2/d},
$$
where we have used \eqref{def-fg} and \eqref{en:bound}, and where the constant $c>0$ is independent of $m$. Since
$\mathrm{tr}(M)\leq d |M|$, the previous estimate leads to 
\begin{align}\label{almost-trace}
\sum_{i=1}^d a_i^2=\mathrm{tr}(M) \leq C\bigg(g_d\bigg(\bigg(\frac{m}{|B|}\bigg)^{2/d} I\bigg)+d\bigg(\frac{m}{|B|}\bigg)^{2/d}\bigg),
\end{align}
for a constant $C>0$ independent of $m$. Finally, by \eqref{def-f-d} and \eqref{average-Psi} we have  
$$
g_d\bigg(\bigg(\frac{m}{|B|}\bigg)^{2/d} I\bigg)=
\begin{cases}
\medskip
\displaystyle-2\log \bigg(\frac{m}{|B|}\bigg) &\quad \text{for } d=2,\\
\displaystyle \bigg(\frac{|B|}{m}\bigg)^{1/3} \frac{2\gamma_3}{|B|}\int_{\mathbb{S}^2} \Psi(y)\,d\mathcal{H}^2(y)  &\quad \text{for } d=3,
\end{cases}
$$
hence this term is negligible in the right-hand side of \eqref{almost-trace}, for $m$ large. Therefore, for $m\gg1$, from \eqref{almost-trace} we deduce the bound 
\begin{align}\label{est-trace}
\sum_{i=1}^d a_i^2 \leq C \bigg(\frac{m}{|B|}\bigg)^{2/d},
\end{align}
which in terms of the rescaled semi-axes $t$ becomes
\begin{align}\label{est-trace-t}
\sum_{i=1}^d t_i^2 \leq C.
\end{align}
In particular, this means that the sequences $(t_i)$ are uniformly bounded in $m$ for $i=1,\dots, d$. Moreover, since $\det M = \prod_{i=1}^da_i^2 = \frac{m^2}{|B|^2}$, we have that 
$$
\prod_{i=1}^dt_i^2 =\frac{|B|^2}{m^2} \prod_{i=1}^da_i^2 = 1,
$$
and, consequently, $t_i$ are also bounded away from zero, uniformly in $m$. Hence, up to a subsequence, $t_i\to \bar t_i$ as $m\to +\infty$, with $0<\bar t_i<+\infty$. This implies in particular that the integral in \eqref{estimate:rescaled-axes} is bounded uniformly in $m$, and finite in the limit $m\to +\infty$, by the dominated convergence theorem. By letting $m\to +\infty$ in \eqref{estimate:rescaled-axes} we finally obtain that $\bar t_i=\bar t_j=1$ for every $i,j=1,\dots,d$.

This shows that in the limit, as the prescribed mass $m$ tends to $+\infty$, the minimising ellipsoid converges to a ball. 

\subsection{The degenerate case: Proof of Theorem~\ref{Mainthm2}}  In this section we assume that $\widehat{W}\geq 0$ on $\mathbb{S}^{d-1}$, hence we work in the degenerate case. 
We distinguish two cases, corresponding to the two possible minimisers for $\mathcal I$ on measures.

If $\mathcal I$ is minimised on $\mathcal{P}(\R^d)$ by the normalised characteristic function of a $d$-dimen\-sion\-al ellipsoid $E^\ast$, then one can easily see that the whole procedure detailed in Section~\ref{sec:nd} follows without change. In particular, there exists a threshold $m^*$ (the mass of $E^\ast$) such that the minimiser of $\mathcal I$ is an ellipsoid for $m\geq m^*$, and is a density in $\mathcal{A}_{m,1}\setminus \mathcal{A}_m$ for $m<m^*$. 

If, instead, $\mathcal I$ is minimised on $\mathcal{P}(\R^d)$ by a measure with a $(d-1)$-dimensional support, then one can repeat the whole argument of Section~\ref{sec:nd} up to Step~3.1. To conclude one has to show that for any prescribed mass $m>0$ the minimiser $M$ of $f$ in the set $\mathcal{M}$ satisfies $\det M=\frac{m^2}{|B|^2}$.
Assume, for contradiction, that $\det M>\frac{m^2}{|B|^2}$. Then by \eqref{new-claim-KKT-2d} we deduce that $M$ is a critical point of $f$. This implies that the corresponding ellipsoid $E$ satisfies \eqref{EL1E-d} and \eqref{EL2E-d} with the equality sign, and by Step~1 it satisfies
\eqref{EL3E-d}, too. Therefore, the measure $\chi_E/|E|$ is a solution of \eqref{old-EL1-d}--\eqref{old-EL2-d}, hence it is a minimiser of $\mathcal I$ on $\mathcal{P}(\R^d)$, contradicting the assumption that minimisers are $(d-1)$-dimensional. Finally, the proof of Step~4 can be repeated verbatim.

\section{Further results}\label{sect:further}

In this section we prove some monotonicity properties of the optimal ellipsoids, and we investigate when the optimal ellipsoid is a ball. We also provide a bound for the critical mass and clarify the meaning of the auxiliary function $f$ which is instrumental in the proof of Theorems~\ref{Mainthm} and~\ref{Mainthm2}.

In the previous section we proved that optimal ellipsoids become more and more rounded in the limit $m\to +\infty$. The next remark shows that this convergence is in some sense monotone. Although it is not necessarily true that each of the semi-axes of the optimal ellipsoids increases as $m$ increases, they do so `collectively'. More precisely, we show that the sum of the squares of the semi-axes, suitably normalised, is increasing in $m$.

\begin{remark}[Monotonicity of the minimiser with respect to $m$]\label{rem:mon}
Let $0<m_1<m_2$. Assume that $\mathcal I$ is minimised by $E_1$ on $\mathcal A_{m_1}$ and by $E_2$ on $\mathcal A_{m_2}$.
Let $M_1$ and $M_2$ be the positive definite matrices associated to $E_1$ and $E_2$, respectively, as in Step~2 of the proof of Theorem~\ref{Mainthm}.
Then 
\begin{equation}\label{monotone}
\frac{\mathrm{tr}(M_1)}{(m_1)^{2/d}}\geq \frac{\mathrm{tr}(M_2)}{(m_2)^{2/d}}. 
\end{equation}
Indeed, from the proof of Theorems~\ref{Mainthm} and~\ref{Mainthm2} we have that $M_i$ minimises the function $f$ in \eqref{def-fg} on the set 
$$
\left\{M\in \mathbb{M}_+: \ \det M\geq \frac{m_i^2}{|B|^2}\right\}.
$$
Therefore, setting $\lambda_i:=(|B|/m_i)^{2/d}$, we obtain
$$
f(M_1)\leq f\left(\frac{\lambda_2}{\lambda_1}M_2\right), \qquad
f(M_2)\leq f\left(\frac{\lambda_1}{\lambda_2}M_1\right).
$$
Using the scaling properties
$$
f(\lambda M)= 
\begin{cases} 
\medskip
f(M)+ (\lambda-1)\mathrm{tr}(M)-2\log\lambda \qquad &\text{ for } d=2,\\
\frac1{\sqrt{\lambda}}f(M)+ \frac1{\sqrt{\lambda}}(\lambda^{3/2}-1)\mathrm{tr}(M) \qquad &\text{ for } d=3,
\end{cases}
$$
for $\lambda>0$, we deduce that
$$
\left(\frac{\mathrm{tr}(M_1)}{(m_1)^{2/d}}-\frac{\mathrm{tr}(M_2)}{(m_2)^{2/d}}\right)(m_2-m_1)\geq0, 
$$
which immediately gives \eqref{monotone}.
\end{remark}

We now investigate the minimality of balls for the energy $\mathcal I$. Note that Theorems~\ref{Mainthm} and~\ref{Mainthm2} guarantee that balls are `asymptotically' optimal, in the limit $m\to +\infty$, but they say nothing about the case of finite $m$. 

In what follows, we denote with $B_{R_m}$ the ball centred at the origin with radius $R_m=(m/|B|)^{1/d}$, so that $|B_{R_m}|=m$.

\begin{prop}[Minimality of the ball for finite mass $m$]\label{next:prop}
Let $W$ be as in \eqref{potdef-dd} with $\kappa$ satisfying Assumption \ref{assumption-k}, and such that $\widehat{W}\geq 0$ on $\mathbb{S}^{d-1}$. Let $\Psi: \mathbb{S}^{d-1}\to \R$ be defined as in \eqref{FTW-dd}, and assume that 
\begin{equation}\label{AAAp}
\int_{\mathbb{S}^{d-1}} \Psi(y)y_iy_j\,d\mathcal{H}^{d-1}(y) = \frac{\delta_{ij}}d \int_{\mathbb{S}^{d-1}} \Psi(y)\,d\mathcal{H}^{d-1}(y)
\end{equation}
for every $i,j=1,\dots,d$.
Let
\begin{equation}\label{mast-b}
m^\circ:=\frac{\gamma_d}d \int_{\mathbb{S}^{d-1}} \Psi(y)\,d\mathcal{H}^{d-1}(y).
\end{equation}
Then, up to translations, $\mu^\circ=\frac{1}{m^\circ}\chi_{B_{R_{m^\circ}}}\in \mathcal{P}(\R^d)$ is the unique minimiser of $\mathcal I$ on $\mathcal{P}(\R^d)$, and $B_{R_m}$ is the unique minimiser of  $\mathcal I$ on $\mathcal A_m$ for $m\geq m^\circ$.

If, instead, \eqref{AAAp} is not satisfied, then balls are never minimisers of  $\mathcal I$ on $\mathcal A_m$ for any mass $m>0$.
\end{prop}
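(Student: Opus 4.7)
The plan is to read off the minimality of balls directly from the Euler--Lagrange system \eqref{EL1E-d}--\eqref{EL3E-d}, via the reformulation \eqref{rewritingEL-d} of Step~2 in the proof of Theorem~\ref{Mainthm}. The crucial algebraic observation is that, evaluating the matrix $Q$ of \eqref{AAc-d} at $M=tI$ (the matrix attached to a ball centred at the origin), one finds
$$q_{ij}= -\frac{m\gamma_d}{2|B|\,t^{d/2}}\int_{\mathbb{S}^{d-1}}\Psi(y)\,y_iy_j\,d\mathcal{H}^{d-1}(y).$$
Hence $Q$ is a scalar multiple of the identity \emph{precisely} when \eqref{AAAp} holds, in which case the definition \eqref{mast-b} of $m^\circ$ collapses this to $Q=-\frac{m\,m^\circ}{2|B|\,t^{d/2}}\,I$.

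First I will handle the positive direction. For the ball $B_{R_m}$ I take $M=R_m^2 I$, so $t^{d/2}=m/|B|$ and $Q=-(m^\circ/2)\,I$. Substituting into \eqref{rewritingEL-d} reduces it to
$$\tfrac{m-m^\circ}{2}\,I=\lambda R_m^{-2}\,I,\qquad \lambda\geq 0,$$
which is solvable (with $\lambda=R_m^2(m-m^\circ)/2$) \emph{iff} $m\geq m^\circ$. This verifies \eqref{EL1E-d}--\eqref{EL2E-d} for $B_{R_m}$ in this range; condition \eqref{EL3E-d} is then free of charge by Step~1 of the proof of Theorem~\ref{Mainthm}, which, as pointed out there, works already under $\widehat W\geq 0$. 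Combining Theorem~\ref{th:equivalence} with the uniqueness statement in Proposition~\ref{ex+uniq-d} then identifies $B_{R_m}$ as the unique minimiser of $\mathcal I$ on $\mathcal A_m$, up to translations, for every $m\geq m^\circ$.

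For the measure-theoretic part, I specialise the argument to $m=m^\circ$: then $Q+(m^\circ/2)I=0$ on $B_{R_{m^\circ}}$, which rewrites (after dividing by $m^\circ$) as \eqref{old-EL1-d} for $\mu^\circ$; the complementary inequality \eqref{old-EL2-d} is again supplied by Step~1 of the proof of Theorem~\ref{Mainthm}. Uniqueness (up to translations) of the minimiser of $\mathcal I$ on $\mathcal{P}(\R^d)$ under $\widehat W\geq 0$ then pins down $\mu^\circ$.

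The converse is a one-line consequence of the same computation: if \eqref{AAAp} fails, then $Q$ evaluated at $M=R_m^2 I$ is \emph{not} a multiple of the identity, so no $\lambda\geq 0$ can satisfy $Q+(m/2)I=\lambda R_m^{-2} I$; hence no ball can satisfy the Euler--Lagrange conditions, and by Theorem~\ref{th:equivalence} no ball is a minimiser of $\mathcal I$ on $\mathcal A_m$ for any $m>0$. The whole proof is essentially a collection of direct algebraic identities; the only nontrivial input needed from the rest of the paper is Step~1 of the proof of Theorem~\ref{Mainthm} to upgrade the on-$E$ conditions to \eqref{EL3E-d}/\eqref{old-EL2-d}, and this is the main (mild) subtlety of the argument.
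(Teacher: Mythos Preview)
Your proof is correct and follows essentially the same route as the paper: both specialise the matrix $Q$ of \eqref{AAc-d} to $M=R_m^2 I$, observe that $Q$ is a scalar multiple of the identity precisely when \eqref{AAAp} holds, and then read off the threshold $m^\circ$ from the sign condition on $\lambda$ in \eqref{rewritingEL-d}. Your treatment of the measure-theoretic claim (specialising to $m=m^\circ$ and invoking Step~1 to obtain \eqref{old-EL2-d}) is slightly more explicit than the paper's, which simply identifies $m^\circ$ with the critical mass $m^*$.
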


\begin{proof}
Let $m>0$ be fixed, and assume that $\widehat{W}\geq 0$ on $\mathbb{S}^{d-1}$. We recall that, by the proof of Theorems~\ref{Mainthm} and~\ref{Mainthm2}, an ellipsoid $E\in \mathcal A_m$ centred at $0$ minimises $\mathcal I$ on $\mathcal A_m$ if and only if its associated matrix $M\in \mathbb{M}_+$ (constructed as in Step~2 of the proof of Theorem~\ref{Mainthm}) satisfies \eqref{rewritingEL-d},
where $Q\in \R^{d\times d}$ is the matrix with components $Q_{ij}=q_{ij}$ defined in \eqref{AAc-d}.

We now specify \eqref{rewritingEL-d} in the special case of $E=B_{R_m}$, namely when the ellipsoid is a ball of mass $m$. First of all, the matrix $M$ associated to $B_{R_m}$ is $M=(m/|B|)^{2/d}I$. Correspondingly, \eqref{AAc-d} becomes
\begin{equation}\label{qij-B}
q_{ij}= -\frac{\gamma_d}{2} \int_{\mathbb{S}^{d-1}} \Psi(y) y_i y_j\,d\mathcal{H}^{d-1}(y),
\end{equation}
which shows that in this case $Q$ does not depend on $m$, or equivalently, it does not depend on the radius of the ball. Moreover, since $M^{-1}=(|B|/m)^{2/d}I$, the equality in \eqref{rewritingEL-d} simplifies to
\begin{equation}\label{AAAb}
Q= \bigg(\lambda \left(\frac{|B|}{m}\right)^{2/d}-\frac m2 \bigg)I.
\end{equation}
Putting together \eqref{qij-B} and \eqref{AAAb} we can write $Q=qI$, with 
\begin{equation}\label{latter}
q= q_{ii}= -\frac{\gamma_d}{2d} \int_{\mathbb{S}^{d-1}} \Psi(y)\,d\mathcal{H}^{d-1}(y), \quad q_{ij}=0 \,\, \text{if } i\neq j.
\end{equation}
Condition \eqref{latter} is in fact equivalent to the equality in \eqref{rewritingEL-d}, and is a condition on the potential. In other words, for any $m>0$, the ball $B_{R_m}$ satisfies the equality condition in \eqref{rewritingEL-d} if and only if the potential satisfies \eqref{latter}, which is exactly condition \eqref{AAAp} in the statement. In particular, if a potential does not satisfy \eqref{AAAp}, then balls cannot be  energy minimisers for finite $m>0$.

From now on we assume that the potential $\Psi$ satisfies \eqref{AAAp}, namely that the equality in \eqref{rewritingEL-d} is satisfied, and focus on the inequality condition $\lambda\geq 0$. By \eqref{AAAb} and 
\eqref{latter} we have that 
$$
\lambda=\left(q+\frac{m}2\right)\left(\frac{m}{|B|}\right)^{2/d},
$$
and hence $\lambda\geq 0$ if and only if $m\geq -2q$. In other words, if $\Psi$ satisfies \eqref{AAAp}, the ball $B_{R_{m}}$ minimises $\mathcal I$ on $\mathcal A_m$  for every $m\geq -2q$.
Therefore, the critical threshold is given by $m^*=-2q$, which can be written as \eqref{mast-b} by \eqref{latter}.
%
\end{proof}

\begin{remark}[Bound on the critical mass] 
%
%
%
We now show that the critical mass $m^*$, when it exists, satisfies the bound 
\begin{equation}\label{bound-mass}
m^*\leq \frac{\gamma_d}d \int_{\mathbb{S}^{d-1}} \Psi(y)\,d\mathcal{H}^{d-1}(y),
\end{equation}
where the equality is attained when the support of the minimisers of $\mathcal I$ on $\mathcal P(\R^d)$ is a ball. 

First note that the right-hand side of \eqref{bound-mass} is exactly given by \eqref{mast-b}, namely by the value of the critical mass if minimisers of $\mathcal I$ on measures are supported on a ball.  It remains to show that the critical mass $m^*$ satisfies the inequality in \eqref{bound-mass} in the general case. 

By \eqref{old-EL1-d}, if the minimiser of $\mathcal I$ on measures, with barycentre at $0$, is of the form $\mu^*=\frac{1}{m^*}\chi_{E^*}$, it satisfies the condition
$$
\partial_{ij}(W\ast \chi_{E^\ast})(x)+m^\ast \delta_{ij} = 0, \quad i, j=1,\dots,d,
$$
for every $x\in E^*$, which thanks to \eqref{WEd} can be written as
\begin{equation}\label{EEEE}
\frac{\gamma_d}{|B|} \int_{\mathbb{S}^{d-1}} \frac{\Psi(y)\, y_i y_j}{(M^*y\cdot y)^{d/2}}\,d\mathcal{H}^{d-1}(y)=\delta_{ij},  \quad i,j=1,\dots,d,
\end{equation}
where $\gamma_d$ is defined in \eqref{def-gamma} and $M^*\in \mathbb{M}_+$ is the matrix associated with $E^*$ (as in Step~2 of the proof of Theorem~\ref{Mainthm}) . From \eqref{EEEE} one can immediately obtain the equalities
\begin{align}\label{uno}
d&=\frac{\gamma_d}{|B|} \int_{\mathbb{S}^{d-1}} \frac{\Psi(y)}{(M^*y\cdot y)^{\frac{d}2}}\,d\mathcal{H}^{d-1}(y),\\\label{due}
\mathrm{tr}(M^*) &= \frac{\gamma_d}{|B|} \int_{\mathbb{S}^{d-1}} \frac{\Psi(y)}{(M^*y\cdot y)^{\frac{d}2-1}}\,d\mathcal{H}^{d-1}(y),
\end{align}
by adding up the diagonal terms in \eqref{EEEE}, and by multiplying the $ij$-th term by $M^*_{ij}$ and then adding the resulting terms up, respectively. 
Since
$$
\mathrm{tr}(M^*) =\sum_{i=1}^d (a^*_i)^2 
$$
where $a_i^*$ are the semi-axes of $E^*$, we conclude that  
\begin{align*}
\sum_{i=1}^d (a^*_i)^2 &\leq \frac{\gamma_d}{|B|} \left(\int_{\mathbb{S}^{d-1}} \frac{\Psi(y)}{(M^*y\cdot y)^{\frac{d}2}}\,d\mathcal{H}^{d-1}(y)\right)^{\frac{d-2}d}
\left(\int_{\mathbb{S}^{d-1}} \Psi(y)\,d\mathcal{H}^{d-1}(y)\right)^{\frac2d}\\
&=\frac{\gamma_d}{|B|} \left(\frac{d|B|}{\gamma_d} \right)^{\frac{d-2}d}\left(\int_{\mathbb{S}^{d-1}} \Psi(y)\,d\mathcal{H}^{d-1}(y)\right)^{\frac2d},
\end{align*}
where we have used \eqref{uno}--\eqref{due}, and H\"older's inequality. Recalling that 
$$
\left(\prod_{i=1}^d (a_i^*)^2\right)^{1/d}\leq \frac1d \sum_{i=1}^d (a_i^*)^2,
$$
we obtain the bound 
\begin{align*}
\left(\prod_{i=1}^d (a_i^*)^2\right)^{1/d}&\leq \frac{1}{d}\frac{\gamma_d}{|B|} \left(\frac{d|B|}{\gamma_d} \right)^{\frac{d-2}d}\left(\int_{\mathbb{S}^{d-1}} \Psi(y)\,d\mathcal{H}^{d-1}(y)\right)^{\frac2d}\\
&=\left( \frac{1}{d}\frac{\gamma_d}{|B|}\int_{\mathbb{S}^{d-1}} \Psi(y)\,d\mathcal{H}^{d-1}(y)\right)^{\frac2d}.
\end{align*}
Hence, 
$$
m^* = |B| \prod_{i=1}^d a_i^* \leq \frac{\gamma_d}{d}\int_{\mathbb{S}^{d-1}} \Psi(y)\,d\mathcal{H}^{d-1}(y),
$$
as claimed.

%
\end{remark}


In the next lemma we show that, up to a multiplicative factor and an additive constant depending on the dimension $d$ and on the potential $W$, 
the auxiliary function $f$ defined in \eqref{def-fg}--\eqref{def-f-d} coincides with the nonlocal energy $\mathcal{I}$ in \eqref{en:usual} evaluated on normalised characteristic functions of ellipsoids.
In particular, the proof of Theorem~\ref{Mainthm} shows that, for an ellipsoid, being a constrained critical point of $f$ on the set $\mathcal M$ is in fact equivalent to being a critical point of $\mathcal{I}$ on densities. In this identification the set $\mathcal M$ corresponds to the set of densities of the form $\chi_E/|E|$ for $E$ ellipsoid centred at $0$ with $|E|\geq m$.

\begin{lemma}\label{lemma}
Let $f:\mathbb{M}_+\to \R$ be defined as in \eqref{def-fg}--\eqref{def-f-d}. 
Then
\begin{equation}\label{eq:fIm}
\frac{1}{d+2} f(M)= \mathcal I\left(\frac{\chi_E}{|E|}\right) + c_d,
\end{equation}
where $c_d$ is a constant depending on the dimension $d$ and on the potential $W$, and $E\subset \R^d$ is the ellipsoid defined as 
$$
E:=\big\{x\in \R^d: \ Mx\cdot x\leq 1\big\}.
$$
\end{lemma}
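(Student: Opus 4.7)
The plan is to compute $\mathcal I(\chi_E/|E|)$ directly, split it into a quadratic and a nonlocal part, and match each piece to the corresponding term of $f(M)/(d+2)$. To this end, I parametrise $M=RD(a^2)R^T$ (with $R\in SO(d)$ and $a_i>0$) and work with the associated ellipsoid $E=RD(a)\overline B$, exactly as in Step~2 of the proof of Theorem~\ref{Mainthm}; this gives $|E|=|B|\sqrt{\det M}$ and $|D(a)R^T y|^2=My\cdot y$, so that every integral over $E$ reduces via the substitution $x=RD(a)z$, $z\in\overline B$, to an integral over $\overline B$.

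The quadratic contribution $\frac{1}{2|E|^2}\int_E\int_E|x-y|^2\,dx\,dy$ simplifies immediately: since $E$ is centred at the origin, it equals $|E|^{-1}\int_E|x|^2\,dx$. The substitution above, together with the standard identity $\int_B z_iz_j\,dz=\delta_{ij}|B|/(d+2)$, gives $\int_E|x|^2\,dx=|E|\,\mathrm{tr}(M)/(d+2)$, so the quadratic part contributes $\mathrm{tr}(M)/(d+2)$, which is exactly the trace term of $f(M)/(d+2)$.

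For the nonlocal part, the same change of variables yields the key identity $\int_E(x\cdot y)^2\,dx=|E|(My\cdot y)/(d+2)$, and hence $\int_E\alpha^2(x,y)\,dx=|E|/(d+2)$ independently of $y\in\mathbb{S}^{d-1}$. In three dimensions, the representation in \eqref{pot-dd} contains no additive constant, and integrating it over $E$ together with $\int_E(1-\alpha^2)\,dx=(d+1)|E|/(d+2)$ leads to
\begin{equation*}
\frac{1}{|E|^2}\int_E(W\ast\chi_E)(x)\,dx
=\frac{1}{2|B|}\sqrt{\frac{\pi}{2}}\,\frac{d+1}{d+2}\int_{\mathbb{S}^2}\frac{\Psi(y)}{\sqrt{My\cdot y}}\,d\mathcal{H}^2(y)
=\frac{g_3(M)}{d+2},
\end{equation*}
using $(d+1)/4=1$ for $d=3$ and the definition of $g_3$. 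Combined with the quadratic contribution this proves the lemma in dimension three with $c_3=0$.

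The main obstacle is the two-dimensional case, where \eqref{pot-dd} carries an additive constant $c(M)$ with a genuine logarithmic dependence on $M$. The analogous integration gives, using $|B|=\pi$ and $\int_{\mathbb{S}^1}\Psi\,d\mathcal{H}^1=2\pi$,
$$
\frac{1}{|E|^2}\int_E(W\ast\chi_E)(x)\,dx=-\frac{1}{d+2}+\frac{c(M)}{|E|},
$$
so the remaining task is to verify that $c(M)/|E|=g_2(M)/(d+2)+C$ for a constant $C$ depending only on $W$. This is where the structure of the 2D potential enters: the required form of $c(M)$ can be read off from the Fourier-based derivation of the potential carried out in \cite{Mora-Muenster}, and can be checked independently against the scaling law $c(\lambda^2 M)=\lambda^2 c(M)-\lambda^2|E|\log\lambda$ forced by the identity $W(\lambda x)=W(x)-\log\lambda$. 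Once this identification is in place, adding the quadratic contribution yields $\mathcal I(\chi_E/|E|)=f(M)/(d+2)+c_2$ with $c_2=C-1/(d+2)$, completing the proof.
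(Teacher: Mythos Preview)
Your three-dimensional argument is correct and in fact slightly cleaner than the paper's: instead of passing through the matrix $Q$ and identifying the additive constant separately, you integrate the representation \eqref{pot-dd} directly via the identity $\int_E\alpha^2(x,y)\,dx=|E|/(d+2)$. Both routes land on \eqref{nonlocal-term}.

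The two-dimensional case, however, has a genuine gap. Your reduction to the identification of $c(M)/|E|$ is correct, but neither of the two justifications you offer is enough. The scaling law $c(\lambda^2 M)=\lambda^2 c(M)-\lambda^2|E|\log\lambda$ only tells you that $h(M):=c(M)/|E|-g_2(M)/4$ satisfies $h(\lambda^2 M)=h(M)$; scale-invariance does not force $h$ to be constant, since $h$ could still depend on the shape $M/|M|$. And the appeal to \cite{Mora-Muenster} is not conclusive: in that derivation the potential is recovered by integrating its gradient, so the constant of integration is precisely the piece left undetermined. To pin down $c(M)$ you must actually compute $(W\ast\chi_E)(0)=\int_E W(y)\,dy$, and doing so ultimately requires the same two ingredients the paper uses --- the Carrillo--Shu decomposition $W(x)=-\frac{1}{2\pi}\int_{\mathbb S^1}\Psi(y)\log|x\cdot y|\,d\mathcal H^1(y)+c$ and the fact that the pushforward of $\chi_E$ onto a line through the origin is a semicircle law of radius $\sqrt{My\cdot y}$. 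Once you feed those in, the single integral $\int_E\log|z\cdot y|\,dz$ evaluates to $|E|\big(\tfrac12\log(My\cdot y)+\text{const}\big)$ and gives $c(M)/|E|=g_2(M)/4+C$ as desired. So your outline is salvageable, but the missing step is essentially the heart of the paper's $d=2$ computation, not a detail one can defer.
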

\begin{proof}
To prove \eqref{eq:fIm} we start working with the quadratic term in $\mathcal{I}$. Let $x\in E$; by the change of variables $x=M^{1/2}z$, with $z\in B$, and since $\det M^{1/2}=\frac{|E|}{|B|}$, we have 
\begin{align}
\frac1{|E|}\int_E|x|^2 \,dx = \frac{1}{|B|}\int _B Mz\cdot z \, dz =  \frac{1}{|B|} \text{tr}(M) \int_{B}z_1^2 \,dz,
\end{align}
where we have used the symmetries of $B$. Since
$$
\int_{B}z_1^2 \, dz = \frac1{d+2}\, |B|,
$$
we conclude that 
\begin{equation}\label{trace:term}
\frac1{|E|^2}\int_E\int_E \frac12 |x-y|^2 \,dxdy =  \frac 1{d+2} \text{tr}(M).
\end{equation}
We now work on the repulsive term in $\mathcal{I}$. We proceed differently for $d=3$ and $d=2$. 

Let $d=3$. By integrating the expression \eqref{pot-dd} in $E$, and by using \eqref{quad:pot} and \eqref{AAc-d}, we have
\begin{align}\label{quad:int}
\int_{E} (W\ast \chi_E)(x) \,dx &= \int_E Qx\cdot x \, dx + |E|\,c 
= \frac{|E|}{|B|} \int_B (M^{1/2} Q M^{1/2}) z\cdot z\, dz + |E|\,c \nonumber\\
&= \frac{|E|}{|B|}\frac{|B|}{5}\, \text{tr}(M^{1/2} Q M^{1/2}) + |E|\,c  =  \frac{|E|}{5} Q\cdot M + |E|\,c\nonumber\\
&= - \frac{|E|^2}{10 |B|}\gamma_3 \int_{\mathbb{S}^{2}} \frac{\Psi(y)}{\sqrt{My\cdot y}}\,d\mathcal{H}^{2}(y)+|E|\,c.
\end{align}
By  \eqref{pot-dd} and \eqref{Da-M} we have
$$
c= \frac{|E|}{2|B|}\gamma_3
\int_{\mathbb{S}^2} \frac{\Psi(y)}{\sqrt{My\cdot y}}\,d\mathcal{H}^2(y).
$$%
Hence \eqref{quad:int} gives 
\begin{equation}\label{nonlocal-term}
\frac1{|E|^2}\int_{E} (W\ast \chi_E)(x)\, dx 
= \frac{2}{5}\frac{\gamma_3}{|B|}
\int_{\mathbb{S}^2} \frac{\Psi(y)}{\sqrt{My\cdot y}}\,d\mathcal{H}^2(y).
\end{equation}
In conclusion, \eqref{trace:term} and \eqref{nonlocal-term} give \eqref{eq:fIm} for $d=3$ with $c_d=0$. 

Let $d=2$. In \cite[Lemma~7.1]{CS2d} Carrillo and Shu proved that in 2d a kernel $W$ of the form \eqref{potdef-dd} can be written as
$$
W(x)=-\frac1{2\pi}\int_{\mathbb{S}^1}\Psi(y)\log |x\cdot y|d\mathcal{H}^1(y) +c
$$
for a suitable constant $c\in\R$ (depending only on $W$), where $\Psi$ is the function in \eqref{FTW-dd}. Therefore, 
\begin{equation}\label{CS-formula}
\int_{E} (W\ast \chi_E)(x) \,dx 
= -\frac1{2\pi}\int_{\mathbb{S}^1}\Psi(y)\left(\iint_{E\times E}\log |(x-z)\cdot y|dxdz \right)\, d\mathcal{H}^1(y) +|E|^2 c.
\end{equation}
To compute the integral at the right-hand side we recall two facts. Let $y\in\mathbb{S}^1$ and let $y^\perp:=(-y_2,y_1)\in \mathbb{S}^1$.
A simple computation shows that the projection of $\chi_E$ on the line passing through $0$ of direction $y$ is a semicircle law of parameter $r(y):=\sqrt{My\cdot y}$, that is,
\begin{equation}\label{fact1}
\int_\R \chi_E(ty+sy^\perp)\,ds = \frac{2|E|}{\pi}\frac1{r^2(y)}\sqrt{r^2(y)-t^2}\chi_{[-r(y),r(y)]}(t),
\end{equation}
for every $t\in\R$. Moreover, the 1d logarithmic potential of a semicircle law is quadratic in the support of the semicircle law; more precisely,
for every $r>0$ and $\xi\in(-r,r)$
\begin{equation}\label{fact2}
-\frac2{\pi}\frac1{r^2}\int_{-r}^r\log |\xi-t| \sqrt{r^2-t^2}\,dt=-\frac1{r^2}\xi^2-\log\frac{r}2+\frac12.
\end{equation}
Now, for any $x,z\in E$ we write $x=x_1y+x_2y^\perp$
and $z=z_1y+z_2y^\perp$, so that by \eqref{fact1} and \eqref{fact2} we have
\begin{align*}
-&\frac1{2\pi}\iint_{E\times E}\log |(x-z)\cdot y|\,dxdz \\
&= -\frac1{2\pi}\iint_{\R\times \R}\log |x_1-z_1| \left( \int_\R \chi_E(x_1y+x_2y^\perp)dx_2\right)
\left( \int_\R \chi_E(z_1y+z_2y^\perp)dz_2\right)
dx_1dz_1 \\
& = \frac{|E|^2}{\pi^2}\int_{-r(y)}^{r(y)}\frac1{r^2(y)}\sqrt{r^2(y)-x_1^2}\left(-\frac1{r^2(y)}x_1^2-\log\frac{r(y)}{2} +\frac12\right)dx_1
\\
& = \frac{|E|^2}{\pi^2}\int_{-1}^{1}\sqrt{1-t^2}\left(-t^2-\log\frac{r(y)}{2}+\frac12\right)dt
\\
&= -\frac{|E|^2}{2\pi} \log r(y)+|E|^2\tilde c,
\end{align*}
where $\tilde c$ is a universal constant. Recalling that $r(y)=\sqrt{My\cdot y}$ by definition, we conclude by \eqref{CS-formula} that
\begin{equation}\label{nonlocal-term2}
\frac1{|E|^2}\int_{E} (W\ast \chi_E)(x) \,dx 
= -\frac{1}{4\pi}\int_{\mathbb{S}^1}\Psi(y)\log (My\cdot y)\,d\mathcal{H}^1(y) + \bar c
\end{equation}
for some constant $\bar c$ depending only on $W$.
In conclusion, \eqref{trace:term} and \eqref{nonlocal-term2} give \eqref{eq:fIm} for $d=2$. 
\end{proof}

\noindent
\textbf{Acknowledgements.}
MGM is a member of GNAMPA--INdAM. MGM acknowledges support by MUR under the grant PRIN2022J4FYNJ. 
LS acknowledges support by the EPSRC under the grants EP/V00204X/1 and EP/V008897/1.


\medskip

\end{document}